\DeclareMathOperator*{\Ran}{Ran}
\DeclareMathOperator*{\Ker}{Ker}
\DeclareMathOperator*{\Fix}{Fix}
\DeclareMathOperator*{\dist}{dist}
\DeclareMathOperator*{\R}{Re}
\newcommand{\dd}{\mathrm{d}}
\newcommand{\ps}{\sigma_{p}}
\newcommand{\B}{\mathcal{B}}
\newcommand{\T}{\mathbb{T}}
\newcommand{\RR}{\mathbb{R}}
\newcommand{\CC}{\mathbb{C}}
\newcommand{\QQ}{\mathbb{Q}}
\newcommand{\NN}{\mathbb{N}}
\newcommand{\DD}{\mathbb{D}}
\newcommand{\EF}{(U(t,s))_{t\ge s\ge0}}
\newtheorem{thm}{Theorem}[section]
\newtheorem{prp}[thm]{Proposition}
\newtheorem{lem}[thm]{Lemma}
\newtheorem{cor}[thm]{Corollary}
\theoremstyle{definition}
\newtheorem{rem}[thm]{Remark}
\newtheorem{ex}[thm]{Example}
\numberwithin{equation}{section}
\begin{document}
\title{Asymptotics for periodic systems}
\author[L. Paunonen]{Lassi Paunonen}
\address[L. Paunonen]{Department of Mathematics, Tampere University of Technology, PO.\ Box 553, 33101 Tampere, Finland}
\email{lassi.paunonen@tut.fi}

\author[D. Seifert]{David Seifert}
\address[D. Seifert]{St John's College, St Giles, Oxford\;\;OX1 3JP, United Kingdom}
\email{david.seifert@sjc.ox.ac.uk}

\begin{abstract}
This paper investigates the asymptotic behaviour of solutions of periodic evolution equations. Starting with a general result concerning the quantified asymptotic behaviour of periodic evolution families we go on to consider a special class of dissipative systems arising naturally in applications.
For this class of systems we analyse in detail the spectral properties of the associated monodromy operator, showing in particular that it is a so-called Ritt operator under a natural `resonance' condition.  This allows us to deduce from our general result a precise description of the asymptotic behaviour of the corresponding solutions. 
In particular, we present conditions for rational rates of 
convergence to periodic solutions in the case where the convergence fails to be uniformly exponential. 
We illustrate our general results by applying them to concrete problems including the one-dimensional wave equation with periodic damping.
\end{abstract}

\subjclass[2010]{%
 35B40, 
47D06 
(%
35B10, 
47A10, 
35L05
)}
\keywords{Asymptotic behaviour, rates of convergence, non-autonomous system, periodic, evolution family, Ritt operator, damped wave equation.}
\thanks{This work was carried out while the first author was visiting Oxford from January to June 2017. The visit was hosted by Professor C.J.K.\ Batty. 
L.\ Paunonen is funded by the Academy of Finland grant number 298182.}

\maketitle

\section{Introduction}\label{sec:intro} 

The aim of this paper is to study stability properties of solutions to non-autonomous periodic evolution equations. An important motivating example is the one-dimensional \emph{damped wave equation}, 
\begin{equation}\label{eq:wave}
\left\{\begin{aligned}
z_{tt}(s,t)&=z_{ss}(s,t)-b(s,t)z_t(s,t),\quad & (s,t)\in\Omega_+,\\
z(0,t)&=z(1,t)=0, &t>0,\\
z(s,0)&=u(s), \quad z_t(s,0)=v(s),& s\in (0,1).
\end{aligned}\right.
\end{equation}
Here $\Omega_+=(0,1)\times(0,\infty)$, $b$ is a suitable non-negative function and the initial data satisfy $u\in H_0^1(0,1)$ and $v\in L^2(0,1)$. It is well known that if $b$ is not the zero function but independent of $t$, then the \emph{energy}
$$E(t)=\frac12\int_0^1 |z_s(s,t)|^2+|z_t(s,t)|^2\,\dd s,\quad t\ge0,$$
associated with any solution satisfies 
\begin{equation}\label{eq:exp_energy}
E(t)\le Me^{-\beta t}\left(\|u\|_{H_0^1}^2+\|v\|_{L^2}^2\right),\quad t\ge0,
\end{equation}
for some constants $M,\beta>0$ which are independent of the initial data; see for instance~\cite{CheFul91}. Similarly, it has recently been observed \cite{CasCin14, RLTT16} that for periodically time-dependent systems the energy of the solutions decays with an exponential rate provided the region in $(s,t)$-space where the damping coefficient $b$ is strictly positive satisfies a certain \emph{Geometric Control Condition (GCC)}.  A similar phenomenon occurs in the context of wave equations with autonomous damping on higher-dimensional spatial domains, where uniform exponential energy decay is in fact characterised by validity of the GCC; see \cite{BarLeb92,BurGer97,RauTay74}. 
For autonomous damped wave equations there is moreover a rich literature investigating the situation where the GCC is violated, showing in particular that it is possible even in this case to obtain rates of energy decay for solutions corresponding to particular initial data; see for instance \cite{BD08,Bur98,Leb96}.  
To date, however, nothing is known about such non-uniform rates of decay in the non-autonomous case. Our principal aim in the present work is to narrow this gap.

In fact, in the non-autonomous setting the energy of the solutions of~\eqref{eq:wave}  generally no longer decays at a uniform exponential rate, even in the presence of a significant amount of damping; see for instance~\cite{RLTT16}.  Indeed, as our examples in Section~\ref{sec:wave} demonstrate, there is no reason to expect energy decay at all if the period of the damping coincides with the period of the undamped wave equation, and instead in this \emph{resonant} case, which will be of particular interest in what follows,
one merely obtains convergence 
to periodic solutions, which have constant but possibly non-zero energy. One of our main objectives is to obtain statements about the \emph{rate} at which this convergence takes place, both when the GCC holds and when it is violated. 

To investigate this problem we begin by viewing the damped wave equation \eqref{eq:wave} as a non-autonomous abstract Cauchy problem of the form
\begin{equation}
  \label{eq:ACPintro}
  \left\{\begin{aligned}
    \dot{z}(t)&=A(t)z(t),\quad &t\ge0,\\
    z(0)&=x.
  \end{aligned}\right.
\end{equation}
Here $x=(u,v)^T\in H_0^1(0,1)\times L^2(0,1)$ is the initial data and the operators $A(t)$ are the form 
$$A(t)=A_0-B(t)B(t)^*,\quad t\ge0,$$ 
where $A_0$ is the wave operator corresponding to the undamped wave equation and the periodic operator-valued function $B$ captures the effect of the damping. 
In Section~2, we introduce a general framework for the study of rates of convergence for periodic non-autonomous systems of the form~\eqref{eq:ACPintro}.
Our approach is based on studying the associated  \textit{evolution family} $\EF$.
The main result of the section, Theorem~\ref{thm:gen}, characterises the quantified asymptotic behaviour of the solutions of~\eqref{eq:ACPintro} in terms of the properties of the so-called \emph{monodromy operator} $U(\tau,0)$, where $\tau>0$ is the period of the function $B$. This result may be viewed as a quantified version of several earlier results concerning the stability of periodic evolution families; see~\cite{Vu95} and also~\cite{BatChi02a, BatHut99, CasCin14, Da78,Ha83, RLTT16} and the references therein.

Then in Section~3 we introduce a class of dissipative systems which includes the damped wave equation. For this class of systems we obtain precise upper and lower bounds for the `energy' of solutions in terms of natural quantities associated with the family $\{A(t):t\ge0\}$. We moreover analyse the spectral properties of the associated monodromy operator,  showing among other things that  $U(\tau,0)$ is a so-called \emph{Ritt operator} under the natural
 resonance condition that the period $\tau$ of the damping coincides with the period of the group generated by $A_0$. Based on these results we  then provide, in the form of Theorem~\ref{thm:special}, a detailed description of the asymptotic behaviour of the corresponding solutions.
 This result shows in particular that there is a rich supply of initial data for which the solution converges (faster than) polynomially to a periodic solution even when uniform exponential convergence is ruled out.

Finally, in Sections~\ref{sec:transport} and \ref{sec:wave}, we apply our general theory to
specific  periodic partial differential equations in one space dimension, 
namely the transport equation and the damped wave equation~\eqref{eq:wave}. These examples demonstrate 
that 
in many natural cases involving 
substantial damping at any given time, the solution of the non-autonomous system 
may well converge to a non-zero periodic solution, 
which, as discussed above,  is in stark contrast to the situation for autonomous systems.
The examples also show how, depending on the precise nature of the damping function $b$, different initial values can lead to different rates of convergence.

The notation we use is more or less standard throughout. In particular, we write $X$ for a generic complex Hilbert space, or occasionally for a general Banach space.  We write $\B(X)$ for the space of bounded linear operators on $X$, and given $T\in\B(X)$ we write $\Ker(T)$ for the kernel and $\Ran(T)$ for the range of $T$. We let $\Fix T=\Ker (I-T)$. If $A$ is an unbounded operator on $X$ then we denote its domain by $D(A)$. Furthermore, we write $\sigma(T)$ for the spectrum and $\sigma_p(T)$ for the point spectrum of $T$. The spectral radius of an operator $T$ is denoted by $r(T)$, and for $\lambda\in\CC\setminus\sigma(T)$ we write $R(\lambda,T)$ for the resolvent operator $(\lambda-T)^{-1}$. We occasionally make use of standard asymptotic notation, such as `little o'.   Finally, we denote by $\T$ the unit circle $\{\lambda\in\CC:|\lambda|=1\}$ and by $\DD$ the open unit disc $\{\lambda\in\CC:|\lambda|<1\}$.

\section{Asymptotics for general periodic systems}\label{sec:general}

Let $X$ be a Hilbert space. An \emph{evolution family} $(U(t,s))_{t\ge s\ge0}$ is a family $\{U(t,s)\in\B(X):t\ge s\ge0\}$ of bounded linear operators on $X$ such that $U(t,t)=I$ for all $t\ge0$, $U(t,r)U(r,s)=U(t,s)$ for $t\ge r\ge s\ge0$, and the map $(t,s)\mapsto U(t,s)x$ is continuous on $\{(t,s):t\ge s\ge0\}$ for all $x\in X$.  We say that the evolution family $(U(t,s))_{t\ge s\ge0}$ is \emph{bounded} if $\sup_{t\ge s\ge0}\|U(t,s)\|<\infty$. Evolution families arise naturally in the context of non-autonomous Cauchy problems of the form
\begin{equation}\label{eq:naCP}
\left\{\begin{aligned}
\dot{z}(t)&=A(t)z(t),\quad &t\ge0,\\
z(0)&=x,
\end{aligned}\right.
\end{equation}
where $A(t)$, $t\ge0$, are closed and densely defined linear operators, and the initial condition $x\in X$ is given. Indeed, if the family $\{A(t):t\ge0\}$ is sufficiently well-behaved then there exists an evolution family $(U(t,s))_{t\ge s\ge0}$ associated with the problem \eqref{eq:naCP} with the property that the function $z\colon\RR_+\to X$ of \eqref{eq:naCP} defined by 
\begin{equation}\label{eq:orbit}
z(t)=U(t,0)x,\quad t\ge0,
\end{equation}
satisfies \eqref{eq:naCP} in an appropriate sense, at least for certain initial values $x\in X$. 
As has been explained in Section~\ref{sec:intro} we shall be interested only in a rather particular type of family $\{A(t):t\ge0\}$, to be introduced formally in Section~\ref{sec:dissip} below, for which the evolution family $(U(t,s))_{t\ge s\ge0}$ is related to the family $\{A(t):t\ge0\}$ through a certain variation of parameters formula and the function $z\colon\RR_+\to X$ defined in \eqref{eq:orbit} solves \eqref{eq:naCP} in a natural weak sense.  We point out, however, that in general the relationship between the family $\{A(t):t\ge0\}$ and the associated evolution family is a rather delicate matter; see for instance \cite[Section~VI.9]{EngNag00book}, \cite{LatRan98} and \cite[Chapter~5]{Paz83}. This is in contrast with the autonomous case where $A(t)=A$ for all $t\ge0$ and $A$ is the generator of a $C_0$-semigroup $(T(t))_{t\ge0}$. Here we may take $U(t,s)=T(t-s)$ for $t\ge s\ge0$, and the function $z(t)=T(t)x$, $t\ge0$, is then the \emph{mild solution} of \eqref{eq:naCP} in the usual sense, and it is a so-called \emph{classical solution} if and only if $x\in D(A)$; see \cite[Section~3.1]{ABHN11}. 

The main result in this section, Theorem~\ref{thm:gen} below, may be viewed as a theorem about the asymptotic behaviour of orbits of evolution families. However, motivated by the particular class of problems to be introduced in Section~\ref{sec:dissip}, we refer to the function $z\colon\RR_+\to X$ defined in \eqref{eq:orbit} as the \emph{solution} of \eqref{eq:naCP}, and consequently the evolution family $(U(t,s))_{t\ge s\ge0}$ is said to be associated with the non-autonomous Cauchy problem \eqref{eq:naCP}. We are particularly interested in evolution families $(U(t,s))_{t\ge s\ge0}$ which are $\tau$-periodic for some $\tau>0$ in the sense that $U(t+\tau,s+\tau)=U(t,s)$ for all $t\ge s\ge0$. This situation will arise in our concrete setting of Section~\ref{sec:dissip} if the family $\{A(t):t\ge0\}$  is $\tau$-periodic. In this case it is natural to consider the so-called \emph{monodromy operator} $T=U(\tau,0)$, and in particular the large-time asymptotic behaviour of the solution operators $U(t,0)$ as $t\to\infty$ is determined by the behaviour as $n\to\infty$ of the powers $T^n$ of the monodromy operator; see instance~\cite{Vu95}.  

We say that a function $z\colon\RR_+\to X$ is  \emph{asymptotically periodic} if there exists a periodic function $z_0\colon\RR_+\to X$ such that $\|z(t)-z_0(t)\|\to0$ as $t\to\infty$, and we say that the convergence is \emph{superpolynomially fast} if $\|z(t)-z_0(t)\|=o(t^{-\gamma})$ as $t\to\infty$ for all $\gamma>0$. We say that the system \eqref{eq:naCP} is \emph{asymptotically periodic} if the solution $z(t)$, $t\ge0$, is asymptotically periodic for all initial values $x\in X$, and we say that the system is \emph{stable} if $\|z(t)\|\to0$ as $t\to\infty$ for all initial values $x\in X$.  Recall that for any power-bounded operator $T\in\B(X)$, the operator $I-T$ is \emph{sectorial} of angle (of most) $\pi/2$, so that the fractional powers $(I-T)^\gamma$ are well-defined for all $\gamma\ge0$; see \cite{HaTo10} for details. Our first result is a quantified asymptotic result in the spirit of~\cite{Vu95}.

\begin{thm}\label{thm:gen}
Consider the non-autonomous Cauchy problem \eqref{eq:naCP} on a Hilbert space $X$, and suppose that the evolution family  $(U(t,s))_{t\ge s\ge0}$ associated with this problem  is bounded and $\tau$-periodic for some $\tau>0$.  
Let $T=U(\tau,0)$ be the monodromy operator, and suppose that $\sigma(T)\cap\T\subseteq\{1\}$ and that 
\begin{equation}\label{eq:res}
\|R(e^{i\theta},T)\|=O(|\theta|^{-\alpha}),\quad \theta\to0,
\end{equation}
for some $\alpha\ge1$. Then $X=\Fix T\oplus Z$, where $Z$ denotes the closure of $\Ran(I-T)$, and if we let $P$ denote the  projection onto $\Fix T$ along $Z$, then for any initial value $x\in X$ the solution $z\colon \RR_+\to X$ of \eqref{eq:naCP} satisfies
\begin{equation}\label{eq:asymp}
\|z(t)-z_0(t)\|\to0,\quad t\to\infty,
\end{equation}
where $z_0\colon\RR_+\to X$ is the $\tau$-periodic solution of \eqref{eq:naCP} with initial condition $z_0(0)=Px$. In particular, the system \eqref{eq:naCP} is asymptotically periodic, and it is stable if and only if $\Fix T=\{0\}$.

Moreover,   
\begin{equation}\label{eq:exp}
\|z(t)-z_0(t)\|\le Me^{-\beta t}\|x\|,\quad t\ge0, \,x\in X,
\end{equation}
for some  $M, \beta>0$ if and only if $\Ran(I-T)$ is closed. In any case, if $x\in X$ is such that $x-Px\in\Ran(I-T)^\gamma$ for some $\gamma>0$ then
\begin{equation}\label{eq:poly}
\|z(t)-z_0(t)\|=o(t^{-\gamma/\alpha}),\quad t\to\infty.
\end{equation}
Furthermore, there exists a dense subspace $X_0$ of $X$ such that for all $x\in X_0$ the convergence in \eqref{eq:asymp} is superpolynomially fast.
\end{thm}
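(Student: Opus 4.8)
The plan is to reduce every assertion to the behaviour of the powers of the monodromy operator $T=U(\tau,0)$. Since $T^n=U(n\tau,0)$ and the family is bounded, $T$ is power-bounded on the Hilbert space $X$; and by $\tau$-periodicity one has the basic identity $U(t,0)=U(s,0)T^n$ whenever $t=n\tau+s$ with $0\le s<\tau$, where $C_\tau:=\sup_{0\le s\le\tau}\|U(s,0)\|<\infty$. Because $T$ is power-bounded on a reflexive space, the mean ergodic theorem gives exactly the stated decomposition $X=\Fix T\oplus Z$ with $Z=\overline{\Ran(I-T)}$ and $P$ the ergodic projection; note that $P$ commutes with $T$, that $Z$ is $T$-invariant, and that $TPx=Px$. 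Setting $z_0(t)=U(t,0)Px$, the relation $z_0(t+\tau)=U(t,0)TPx=U(t,0)Px$ shows $z_0$ is $\tau$-periodic, it solves \eqref{eq:naCP} with $z_0(0)=Px$, and
\[
z(t)-z_0(t)=U(s,0)\,T^n(x-Px),\qquad t=n\tau+s,\quad x-Px\in Z.
\]
Thus everything becomes a statement about $\|T^n y\|$ for $y\in Z$.

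The analytic input is a quantified Katznelson--Tzafriri estimate: since $T$ is power-bounded on a Hilbert space, $\sigma(T)\cap\T\subseteq\{1\}$, and \eqref{eq:res} holds with $\alpha\ge1$, one has $\|T^n(I-T)\|=O(n^{-1/\alpha})$ as $n\to\infty$ (the discrete Hilbert-space analogue of the Borichev--Tomilov theorem). Combined with the moment inequality for the sectorial operator $I-T$ (see \cite{HaTo10}), this upgrades to $\|T^n(I-T)^\gamma\|=O(n^{-\gamma/\alpha})$ for every $\gamma>0$. From the case $\gamma=1$, density of $\Ran(I-T)$ in $Z$ and power-boundedness, we get $\|T^n y\|\to0$ for all $y\in Z$, whence \eqref{eq:asymp} and asymptotic periodicity follow from the identity above; the system is stable exactly when $P=0$, i.e. $\Fix T=\{0\}$, since for $0\ne x\in\Fix T$ one has $z(n\tau)=T^nx=x\not\to0$. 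For \eqref{eq:poly}, write $x-Px=(I-T)^\gamma w$; replacing $w$ by $(I-P)w$ (legitimate since $\Fix T\subseteq\Ker(I-T)^\gamma$ and $P$ commutes with $(I-T)^\gamma$) we may take $w\in Z$, and then $\|z(t)-z_0(t)\|\le C_\tau\|T^n(I-T)^\gamma\|\,\|w\|=O(t^{-\gamma/\alpha})$. To sharpen $O$ to $o$ I would approximate $w=\lim_k(I-T)v_k$ in $Z$ and estimate $n^{\gamma/\alpha}\|T^n(x-Px)\|\le C\|w-(I-T)v_k\|+Cn^{-1/\alpha}\|v_k\|$, so that $\limsup_n n^{\gamma/\alpha}\|T^n(x-Px)\|\le C\|w-(I-T)v_k\|\to0$ as $k\to\infty$.

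For the exponential dichotomy I would pass to $T_Z:=T|_Z$, which satisfies $\sigma(T_Z)\subseteq\sigma(T)\subseteq\overline{\DD}$ and $\sigma(T_Z)\cap\T\subseteq\{1\}$. If $\Ran(I-T)$ is closed, then $Z=\Ran(I-T)=(I-T)Z$, so $(I-T)|_Z\colon Z\to Z$ is a continuous bijection (injectivity because $\Fix T\cap Z=\{0\}$), hence boundedly invertible by the open mapping theorem; therefore $1\notin\sigma(T_Z)$, and combined with $\sigma(T_Z)\cap\T\subseteq\{1\}$ this forces $r(T_Z)<1$, so $\|T_Z^n\|\le Me^{-\beta n}$ and the identity above yields \eqref{eq:exp}. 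Conversely, evaluating \eqref{eq:exp} at $t=n\tau$ on vectors $x\in Z$ (for which $Px=0$) gives $\|T_Z^n\|\le Me^{-\beta n\tau}$, hence $r(T_Z)<1$, hence $1-T_Z$ is invertible and $\Ran(I-T)=(I-T)Z=Z$ is closed.

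Finally, for the dense subspace I would take $X_0=\Fix T+\bigcap_{k\ge1}\Ran\bigl((I-T)^k\bigr)$. If $x\in X_0$ then $x-Px=(I-P)y$ with $y\in\bigcap_k\Ran((I-T)^k)$, and since $I-P$ commutes with $I-T$ we get $x-Px\in\bigcap_k\Ran((I-T)^k)$; applying \eqref{eq:poly} with $\gamma=k$ for every $k\in\NN$ shows the convergence in \eqref{eq:asymp} is superpolynomially fast. Density of $X_0$ reduces, via the mean ergodic decomposition, to density of $\bigcap_k\Ran((I-T_Z)^k)$ in $Z$; here I would use that $I-T_Z$ is an injective sectorial operator with dense range on the Hilbert space $Z$, for which this intersection is dense (the fractional-range spaces $\Ran((I-T_Z)^\gamma)$ all have closure $Z$, and their intersection over $\gamma>0$ can be shown dense by a regularisation argument). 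I expect the two genuinely non-routine steps to be, first, importing the sharp quantified Katznelson--Tzafriri estimate on Hilbert space (this is exactly where the hypothesis $\alpha\ge1$ and the absence of a logarithmic loss enter), and second, the density of $\bigcap_k\Ran((I-T)^k)$ in $Z$, which is the structural fact underpinning the existence of $X_0$; the remaining work is bookkeeping with the periodicity relation $U(t,0)=U(s,0)T^n$ and with the ergodic decomposition, much as in \cite{Vu95}.
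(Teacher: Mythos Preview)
Your proposal is correct and follows essentially the same route as the paper: reduce to powers of $T$ via $U(t,0)=U(s,0)T^n$, apply the mean ergodic decomposition, use a quantified Katznelson--Tzafriri theorem together with the moment inequality for the polynomial rate, argue the exponential dichotomy through $r(T|_Z)$, and obtain the dense subspace from $\bigcap_k\Ran((I-T)^k)$. The only point where the paper is more specific than you is the density of that intersection: rather than a ``regularisation argument'' for sectorial operators, the paper invokes the Esterle--Mittag-Leffler theorem \cite[Theorem~2.1]{Est84}, applied to the decreasing sequence $X_k=\Fix T\oplus\Ran((I-T)^k)$, which is the clean abstract tool giving density of a countable intersection of dense subspaces with continuous dense inclusions; for the $o(n^{-1/\alpha})$ rate the paper cites \cite[Theorem~3.19 and Remark~3.12]{Sei16} directly, whereas you recover the little-$o$ from the big-$O$ by approximation, which is equally valid.
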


\begin{proof}
Since the evolution family $(U(t,s))_{t\ge s\ge0}$ is assumed to be bounded, there exists $C>0$ such that $\|U(t,s)\|\le C$ for $t\ge s\ge 0$. In particular, $\sup_{n\ge0}\|T^n\|\le C$, so the monodromy operator $T$  is power-bounded. It follows from the mean ergodic theorem \cite[Chapter~2, Theorem~1.3]{Kre85} that $X=\Fix T\oplus Z$. Since $\sigma(T)\cap\T\subseteq\{1\}$ we have that $\|T^n(I-T)\|\to0$ as $n\to\infty$ by the Katznelson-Tzafriri theorem \cite[Theorem~1]{KT86}. Hence $\|T^nx\|\to0$ as $n\to\infty$ for all $x\in \Ran(I-T)$, and by power-boundedness of $T$ the statement extends to all $x\in Z$. In particular, we deduce that 
\begin{equation}\label{eq:proj}
\|T^nx-Px\|=\|T^n(I-P)x\|\to0,\quad n\to\infty,
\end{equation}
for all $x\in X$. Given $t\ge0$, if we let $n\ge0$ be the unique integer such that $t-n\tau\in[0,1)$, then by periodicity and contractivity of $(U(t,s))_{t\ge s\ge0}$ we have
\begin{equation}\label{eq:per}
\|z(t)-z_0(t)\|=\|U(t-n\tau,0)(T^nx-Px)\|\le C \|T^nx-Px\|,
\end{equation}
which implies \eqref{eq:asymp}. 

Now let $S$ denote the restriction of $T$ to the invariant subspace $Z$ and recall that $X=\Fix T\oplus Z$. Then $\sigma(S)\subseteq\sigma(T)$ and hence $\sigma(S)\subseteq\DD\cup\{1\}$. Moreover, the operator $I-S$ maps $Z$ bijectively onto $\Ran(I-T)$, so by the inverse mapping theorem we have $1\in\CC\setminus\sigma(S)$ if and only if $\Ran(I-T)$ is closed.  Thus if $\Ran(I-T)$ is closed, then $r(S)<1$ and we may take $r\in(r(S),1)$ and find a constant $K>0$ such that $\|S^n\|\le Kr^n$ for all $n\ge0$. It follows from \eqref{eq:per} that for $t\ge0$ and $n\ge0$ such that $t-n\tau\in[0,1)$ we have 
$$\|z(t)-z_0(t)\|\le CKr^n\|x\|,$$
 so \eqref{eq:exp} holds for $M=CKr^{-1/\tau}$ and $\beta =\frac{1}{\tau}\log\frac{1}{r}$. On the other hand, if \eqref{eq:exp} holds for some $M,\beta>0$, then for $x\in Z$ we have
 $$\|S^nx\|=\|z(n\tau)\|\le Me^{-\beta n\tau}\|x\|,\quad n\ge0,$$
 and in particular $\|S^n\|<1$ for sufficiently large $n\ge0$. Hence $r(S)<1$ and $\Ran(I-T)$ is closed.

For the last part, note that by \cite[Theorem~3.19 and Remark~3.12]{Sei16} condition \eqref{eq:res} implies that for $x\in\Ran(I-T)$ we have $\|T^nx\|=o(n^{-1/\alpha})$ as $n\to\infty$. By iterating this result and applying the moment inequality \cite[Theorem~II.5.34]{EngNag00book} to the sectorial operator $I-T$ it is now straightforward to obtain \eqref{eq:poly}. Finally, consider the spaces $X_k=\Fix T\oplus \Ran(I-T)^k$, $k\ge1$, and let $X_0=\bigcap_{k\ge1} X_k$. Since  $X_k$ is dense in $X$ for each $k\ge1$, it follows from a straightforward application of the Esterle-Mittag-Leffler theorem \cite[Theorem~2.1]{Est84} that $X_0$ is also dense in $X$. By construction the convergence in \eqref{eq:naCP} is superpolynomially fast  for each $x\in X_0$, so the proof is complete.
\end{proof}

\begin{rem}\label{rem:gen}
\begin{enumerate}[(a)]
\item We remark that the restriction in the statement of Theorem~\ref{thm:gen} that $\alpha\ge1$ is natural, since if $1\in\sigma(T)$ then the standard lower bound $\|R(\lambda,T)\|\ge(\dist(\lambda,\sigma(T)))^{-1}$, $\lambda\in\CC\setminus\sigma(A)$, implies that no smaller values of $\alpha$ can arise.
\item\label{it:slow}  In the case where $\Ran(I-T)$ is not closed we can in fact say more. Indeed, in this case $r(S)=1$ and it follows from \cite[Theorem~1]{Mue88} that for every sequence $(r_n)_{n\ge0}$ of non-negative terms converging to zero there exists $x\in Z$ such that $\|S^nx\|\ge r_n$ for all $n\ge0$. A simple argument as in the first part of the proof of \cite[Lemma~3.1.7]{vN96} now shows the convergence in \eqref{eq:asymp} is in fact arbitrarily slow in the sense that for any function $r\colon\RR_+\to[0,\infty)$ such that $r(t)\to0$ as $t\to\infty$ there exists $x\in X$ such that $\|z(t)-z_0(t)\|\ge r(t)$ for all $t\ge0$. So we have a dichotomy for the rate of decay: either it is uniformly exponentially fast, or it is arbitrarily slow for suitable initial values.
\item\label{it:proj} It follows from \eqref{eq:proj} that the projection $P$ onto $\Fix T$ along $Z$ satisfies $\|P\|\le \sup_{n\ge0}\|T^n\|$. In particular, if $T$ is a contraction then the projection $P$ is orthogonal.
\end{enumerate}
\end{rem}

It is straightforward for any $\alpha\ge1$  to construct examples of families $\{A(t):t\ge0\}$ of suitable multiplication operators to which Theorem~\ref{thm:gen} can be applied. In the next section we consider a special class of operators $A(t)$, $t\ge0$,  which are useful in applications and to which Theorem~\ref{thm:gen} can be applied with $\alpha=1$.

\section{A class of dissipative systems}\label{sec:dissip}

We now restrict our attention to the case where 
\begin{equation}\label{eq:sum}
A(t)=A_0-B(t)B(t)^*,\quad t\ge0,
\end{equation}
with $D(A(t))=D(A_0)$, $t\ge0$. Here $A_0$ is  assumed to be the infinitesimal generator of a unitary group $(T_0(t))_{t\in\RR}$ on $X$ and $B\in L_{\mathrm{loc}} ^2(\RR_+;\B(V,X))$ for some Hilbert space $V$. In particular, the operators $A(t)$, $t\ge0$, are dissipative. It follows from the Lumer-Phillips theorem and the results in  \cite[Chapter~5]{Paz83}, and in particular from \cite[Remark~5.3.2]{Paz83}, that there exists an evolution family $(U(t,s))_{t\ge s\ge0}$ of contractions associated with \eqref{eq:naCP} in the sense that the function $z\colon\RR_+\to X$ defined by $z(t)=U(t,0)x$, $t\ge0$, satisfies the variation of parameters formula
\begin{equation}\label{eq:vop}
z(t)=T_0(t)x-\int_0^tT_0(t-s)B(s)B(s)^*z(s)\,\dd s,\quad t\ge0
\end{equation}
 and hence may be viewed as a \emph{mild solution} of \eqref{eq:naCP}. As is easily verified, this mild solution can moreover be thought of as a weak solution of \eqref{eq:naCP} in the sense that for every $y\in D(A_0^*)$ the map $t\mapsto(z(t),y)$ is absolutely continuous on $\RR_+$ and 
$$\frac{\dd}{\dd t}(z(t),y)=(z(t),A(t)^*y)$$
for almost all $t\ge0$. We begin with a simple lemma which will be useful in studying the asymptotic behaviour of the solution of \eqref{eq:naCP}. 

\begin{lem}\label{lem:energy}
Let $A(t)$, $t\ge0$, be as in \eqref{eq:sum} and let $\tau>0$. Then 
$$\frac{\|x\|^2-\|U(\tau,0)x\|^2}{2}=\int_0^\tau\|B(t)^*U(t,0)x\|^2\,\dd t,\quad x\in X.$$
\end{lem}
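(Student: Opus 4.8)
The plan is to exploit the unitarity of $(T_0(t))_{t\in\RR}$ in order to replace the convolution structure of the variation of parameters formula~\eqref{eq:vop} by a plain $X$-valued primitive, to which the fundamental theorem of calculus applies. Writing $z(t)=U(t,0)x$ and $w(t)=T_0(-t)z(t)$ for $t\ge0$, I would apply $T_0(-t)$ to both sides of~\eqref{eq:vop}; since $(T_0(t))_{t\in\RR}$ is a group one has $T_0(-t)T_0(t)=I$ and $T_0(-t)T_0(t-s)=T_0(-s)$, and since $T_0(-t)$ is bounded it commutes with the Bochner integral, so that
\begin{equation*}
w(t)=x-\int_0^t T_0(-s)B(s)B(s)^*z(s)\,\dd s,\qquad t\ge0.
\end{equation*}
The integrand here lies in $L^1_{\mathrm{loc}}(\RR_+;X)$: by unitarity of $T_0(-s)$ and contractivity of the evolution family one has $\|T_0(-s)B(s)B(s)^*z(s)\|\le\|B(s)\|^2\,\|z(s)\|\le\|x\|\,\|B(s)\|^2$, and $s\mapsto\|B(s)\|^2$ is locally integrable because $B\in L^2_{\mathrm{loc}}(\RR_+;\B(V,X))$. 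Hence $w$ is absolutely continuous on every compact subinterval of $\RR_+$, with $w'(s)=-T_0(-s)B(s)B(s)^*z(s)$ for almost every $s\ge0$.

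Next, since each $T_0(t)$ is unitary we have $\|z(t)\|=\|w(t)\|$, so $\|z(\cdot)\|^2=\|w(\cdot)\|^2$; and a Hilbert-space valued function that is absolutely continuous on a compact interval has absolutely continuous squared norm there, with $\frac{\dd}{\dd t}\|w(t)\|^2=2\,\R(w'(t),w(t))$ for almost every $t$ (this follows at once from $\|w(t)\|^2-\|w(s)\|^2=(w(t)-w(s),w(t))+(w(s),w(t)-w(s))$ together with continuity and almost-everywhere differentiability of $w$). Substituting the formula for $w'$ and using unitarity of $T_0(-t)$ once more,
\begin{equation*}
\frac{\dd}{\dd t}\|z(t)\|^2=-2\,\R\big(B(t)B(t)^*z(t),z(t)\big)=-2\,\|B(t)^*z(t)\|^2
\end{equation*}
for almost every $t\ge0$, the last equality being the elementary identity $(B(t)B(t)^*y,y)=\|B(t)^*y\|^2$. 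Integrating this over $[0,\tau]$ and using $z(0)=x$ and $z(\tau)=U(\tau,0)x$ gives $\|U(\tau,0)x\|^2-\|x\|^2=-2\int_0^\tau\|B(t)^*U(t,0)x\|^2\,\dd t$, which rearranges to the claimed identity.

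Every step above is routine, and no density or approximation argument is needed, since~\eqref{eq:vop} — and hence the whole computation — holds for every $x\in X$. The one point I would take real care over is the claim that $w$ is genuinely absolutely continuous, and therefore recoverable from its derivative: this is exactly where the hypothesis $B\in L^2_{\mathrm{loc}}$ (which yields $\|B(\cdot)\|^2\in L^1_{\mathrm{loc}}$), rather than mere local integrability of $B$ itself, is used, together with the a priori contractivity bound $\|z(t)\|\le\|x\|$, to place the integrand of~\eqref{eq:vop} in $L^1_{\mathrm{loc}}(\RR_+;X)$ after the substitution $w=T_0(-\cdot)z$. The only other thing worth spelling out carefully is the commutation of $T_0(-t)$ with the Bochner integral and the use of the group law $T_0(-t)T_0(t-s)=T_0(-s)$, since it is precisely these that collapse the convolution in~\eqref{eq:vop} into a plain primitive.
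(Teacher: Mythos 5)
Your argument is correct, and it takes a genuinely different route from the one in the paper. The paper proves the identity first for $B$ constant on $(0,\tau)$ and $x\in D(A_0)$ via the fundamental theorem of calculus (using that $A(t)=A_0-BB^*$ then generates a $C_0$-semigroup and that $A_0$ is skew-adjoint), extends to all $x\in X$ by density, then to step functions $B$, and finally to general $B\in L^2_{\mathrm{loc}}$ by a ``standard approximation argument''. You instead work directly from the variation of parameters formula~\eqref{eq:vop} for arbitrary $B$ and $x$: applying $T_0(-t)$ collapses the convolution to a plain primitive $w(t)=x-\int_0^t T_0(-s)B(s)B(s)^*z(s)\,\dd s$, whose integrand you correctly bound by $\|x\|\,\|B(\cdot)\|^2\in L^1_{\mathrm{loc}}$ using contractivity of $U(\cdot,0)$ and $B\in L^2_{\mathrm{loc}}$; absolute continuity of $w$ then gives absolute continuity of $\|w\|^2=\|z\|^2$ with $\frac{\dd}{\dd t}\|z(t)\|^2=2\R(w'(t),w(t))=-2\|B(t)^*z(t)\|^2$ a.e., and integrating over $[0,\tau]$ finishes the proof. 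Both routes are sound. Your version is more self-contained — it needs no approximation of $B$ and no restriction to $D(A_0)$ — at the modest cost of the Hilbert-space calculus lemma on differentiating $\|w\|^2$ for absolutely continuous $w$, which you justify adequately; the paper's version is shorter on the page precisely because it defers those details to ``standard'' arguments.
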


\begin{proof}
If $B$ is constant on $(0,\tau)$ then the identity follows from the fundamental theorem of calculus for $x\in D(A_0)$, and by density it then holds for all $x\in X$. A similar argument applies when $B$ is a step function. Since $B\in L_{\mathrm{loc}} ^2(\RR_+;\B(V,X))$, a standard approximation argument yields the same identity in the general case. 
\end{proof}

\begin{rem}\label{rem:const_energy}
Let $\tau>0$ and $x\in X$.
  Then by Lemma~\ref{lem:energy} and the variation of parameters formula~\eqref{eq:vop} 
  we have that
$\|U(\tau,0)x\|=\|x\|$ if and only if $U(t,0)x=T_0(t)x$ for all $t\in[0,\tau]$.
\end{rem}

Let  $B^*\in L_{\mathrm{loc}} ^2(\RR_+;\B(X,V))$ be the function defined by $B^*(t)=B(t)^*$, $t\ge0$. Given a subset $Z$ of $X$ and a constant $\tau>0$ we say that the pair $(B^*, A)$ is \emph{approximately $Z$-observable on $(0,\tau)$} if for all $x\in Z$ the condition 
$$\int_0^\tau\|B(t)^*U(t,0)x\|^2\,\dd t=0$$
implies that $x=0$, and we say that $(B^*, A)$ is \emph{exactly $Z$-observable on $(0,\tau)$} if there exists a constant $\kappa>0$ such that
$$\int_0^\tau\|B(t)^*U(t,0)x\|^2\,\dd t\ge \kappa^2\|x\|^2$$
for all $x\in Z$. If $Z=X$ we simply call the pair $(B^*,A)$ approximately or exactly observable on $(0,\tau)$. For further discussion of observability and related concepts for non-autonomous systems see for instance \cite[Section~5]{Sch02}.

\begin{lem}\label{lem:obs}
Let $A(t)$, $t\ge0$, be as in \eqref{eq:sum} and let $\tau>0$. Then for all $x\in X$ we have
$$\frac{1}{c_\tau^{2}}\int_0^\tau\|B(t)^*T_0(t)x\|^2\,\dd t
\le \int_0^\tau\|B(t)^* U(t,0)x\|^2\,\dd t
\le \int_0^\tau\|B(t)^*T_0(t)x\|^2\,\dd t,$$
where $c_\tau=1+\|B\|_{L^2(0,\tau)}^2.$
In particular, given any subset $Z$ of $X$ the pair $(B^*,A)$ is approximately (respectively, exactly) $Z$-observable on $(0,\tau)$ if and only if  $(B^*,A_0)$ is approximately (respectively, exactly) $Z$-observable on $(0,\tau)$.
\end{lem}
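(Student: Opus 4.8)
The plan is to reduce both inequalities to the variation of parameters formula \eqref{eq:vop} together with elementary Hilbert-space estimates, with the only delicate point being an energy identity obtained by the same approximation scheme used to prove Lemma~\ref{lem:energy}. Fix $x\in X$ and $\tau>0$, and write $w(t)=B(t)^*U(t,0)x$ and $f(t)=B(t)^*T_0(t)x$ for $t\in[0,\tau]$. By Lemma~\ref{lem:energy} we have $w\in L^2(0,\tau;V)$, while unitarity of $(T_0(t))_{t\in\RR}$ gives $\|f(t)\|\le\|B(t)\|\,\|x\|$ and hence $f\in L^2(0,\tau;V)$ with $\|f\|_{L^2(0,\tau)}\le\|B\|_{L^2(0,\tau)}\|x\|$. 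Setting $\xi(t)=\int_0^tT_0(t-s)B(s)w(s)\,\dd s$, the formula \eqref{eq:vop} reads exactly $U(t,0)x=T_0(t)x-\xi(t)$; applying $B(t)^*$ yields $w=f-B^*\xi$ in $L^2(0,\tau;V)$, where $B^*\xi$ denotes $t\mapsto B(t)^*\xi(t)$. This is the identity on which both inequalities hinge.

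For the left-hand inequality I would estimate $\xi$ directly. By the Cauchy--Schwarz inequality $\|\xi(t)\|\le\int_0^t\|B(s)\|\,\|w(s)\|\,\dd s\le\|B\|_{L^2(0,\tau)}\|w\|_{L^2(0,\tau)}$ for every $t\in[0,\tau]$, so that $\|B^*\xi\|_{L^2(0,\tau)}^2\le\|B\|_{L^2(0,\tau)}^2\|w\|_{L^2(0,\tau)}^2\int_0^\tau\|B(t)\|^2\,\dd t=\|B\|_{L^2(0,\tau)}^4\|w\|_{L^2(0,\tau)}^2$. Since $f=w+B^*\xi$, this gives $\|f\|_{L^2(0,\tau)}\le(1+\|B\|_{L^2(0,\tau)}^2)\|w\|_{L^2(0,\tau)}=c_\tau\|w\|_{L^2(0,\tau)}$, which is precisely the first inequality.

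For the right-hand inequality the crucial ingredient is the identity $\|\xi(\tau)\|^2=2\,\R\langle w,f-w\rangle_{L^2(0,\tau;V)}$. One obtains this by differentiating $t\mapsto\|\xi(t)\|^2$, using $\dot\xi(t)=A_0\xi(t)+B(t)w(t)$ and $\R\langle A_0 v,v\rangle=0$ for $v\in D(A_0)$, and integrating over $[0,\tau]$; exactly as in Lemma~\ref{lem:energy}, this is justified first for $x\in D(A_0)$ with $B$ constant, then for $B$ a step function, and finally in general by approximation. (Equivalently, it follows from $\xi(\tau)=T_0(\tau)x-U(\tau,0)x$ by expanding $\|\xi(\tau)\|^2$, using unitarity of $T_0$ to symmetrise, and splitting the double integral at $s=r$.) Granting this and using $f-w=B^*\xi$, we get $\R\langle w,f\rangle_{L^2(0,\tau;V)}=\|w\|_{L^2(0,\tau)}^2+\tfrac12\|\xi(\tau)\|^2\ge\|w\|_{L^2(0,\tau)}^2$, while Cauchy--Schwarz gives $\R\langle w,f\rangle_{L^2(0,\tau;V)}\le\|w\|_{L^2(0,\tau)}\|f\|_{L^2(0,\tau)}$; dividing by $\|w\|_{L^2(0,\tau)}$ (the case $w=0$ being trivial) yields $\|w\|_{L^2(0,\tau)}\le\|f\|_{L^2(0,\tau)}$, the second inequality.

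The final assertion about observability is then immediate from the two-sided bound: for $x$ in a given subset $Z$ the quantity $\int_0^\tau\|B(t)^*U(t,0)x\|^2\,\dd t$ vanishes if and only if $\int_0^\tau\|B(t)^*T_0(t)x\|^2\,\dd t$ does, giving the equivalence of approximate $Z$-observability, and an exact observability estimate with constant $\kappa$ for one of the pairs $(B^*,A)$, $(B^*,A_0)$ transfers to the other with constant $\kappa$ in one direction and $\kappa/c_\tau$ in the other. The only real obstacle is making the energy identity $\|\xi(\tau)\|^2=2\,\R\langle w,f-w\rangle$ rigorous for arbitrary $x\in X$ and $B\in L^2_{\mathrm{loc}}(\RR_+;\B(V,X))$; once that is in place, everything reduces to Cauchy--Schwarz and \eqref{eq:vop}.
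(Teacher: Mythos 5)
Your proposal is correct and is essentially the paper's proof in lightly different notation. The paper packages the same computation by introducing the operators $\Phi_\tau, \Psi_\tau$ and $Q_\tau = I + R_\tau$ on $L^2(0,\tau;V)$: your identity $f = w + B^*\xi$ is exactly the factorisation $\Phi_\tau x = Q_\tau \Psi_\tau x$, your Cauchy--Schwarz bound on $\|B^*\xi\|_{L^2}$ gives $\|Q_\tau\|\le c_\tau$, and your energy identity $\|\xi(\tau)\|^2 = 2\,\R\langle w, f-w\rangle$ (which the paper obtains directly by the Fubini--symmetrisation argument you sketch as the "equivalent" route, with no need for the differentiation/approximation detour) is precisely the statement $\R\langle R_\tau w, w\rangle = \tfrac12\|\xi(\tau)\|^2 \ge 0$, from which $\|w\|^2 \le \R\langle Q_\tau w, w\rangle \le \|Q_\tau w\|\,\|w\|$ yields the second inequality. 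The only cosmetic difference is that you argue pointwise on $w = \Psi_\tau x$ and so never need to show $Q_\tau$ is invertible, whereas the paper goes on to prove $Q_\tau$ has dense closed range with $\|Q_\tau^{-1}\|\le 1$; that extra step is not needed for the lemma as stated, so your streamlining is legitimate.
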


\begin{proof}
 Consider the operators $\Phi_\tau,\Psi_\tau\in\B(X,L^2(0,\tau;V))$ given, for $x\in X$ and $t\in(0,\tau)$, by
$$(\Phi_\tau x)(t)= B(t)^*T_0(t)x\quad\mbox{and}
\quad (\Psi_\tau x)(t)= B(t)^*U(t,0)x.
$$
We show that there exists an isomorphism $Q_\tau\in\B(L^2(0,\tau;V))$ such that $\Phi_\tau=Q_\tau\circ\Psi_\tau$, and that moreover $\|Q_\tau\|\le c_\tau$ and 
$\|Q_\tau^{-1}\|\le 1$.
Indeed,  a straightforward calculation using  \eqref{eq:vop}  shows that
$$(\Psi_\tau x)(t) =(\Phi_\tau x)(t)- ((R_\tau\circ \Psi_\tau )x)(t)$$
for all $x\in X$ and almost all $t\in(0,\tau)$, where
$$(R_\tau y)(t)=\int_0^t B(t)^*T_0(t-s)B(s)y(s)\,\dd s$$
for $y\in L^2(0,\tau;V)$ and almost all $t\in(0,\tau)$. Thus $\Phi_\tau=Q_\tau\circ\Psi_\tau$, where $Q_\tau=I+R_\tau$, and a simple estimate gives  $Q_\tau\in\B( L^2(0,\tau;V))$ with $\|Q_\tau\|\le c_\tau$. We now show that $\R R_\tau\ge0$.  Let $y\in L^2(0,\tau; V)$. Then 
\begin{align*}
  \R(R_\tau y,y)
 =\int_0^\tau\int_0^t\R\big( T_0(-t)B(t)y(t), T_0(-s)B(s)y(s)\big)\,\dd s\,\dd t.
\end{align*}
Using Fubini's theorem to interchange the order of integration, we may rewrite the double integral to obtain
$$\R(R_\tau y,y)=\int_0^\tau\int_t^\tau\R\big( T_0(-t)B(t)y(t), T_0(-s)B(s)y(s)\big)\,\dd s\,\dd t.$$
Adding these two identities gives
$$\R(R_\tau y,y)=\frac12\left\|\int_0^\tau T_0(-t)B(t)y(t)\,\dd t\right\|^2\ge0,$$
as required. We now show that $Q_\tau$ is invertible. Indeed, $\Ran Q_\tau$ is dense because if $z\in L^2(0,\tau;V)$ is such that $(Q_\tau y,z)=0$ for all $y\in L^2(0,\tau;V)$, then in particular 
$$\|z\|^2\le \|z\|^2+\R(R_\tau z,z)=\R(Q_\tau z,z)=0,$$
so $z=0$. Moreover, 
$$\|y\|^2\le \R(Q_\tau y,y)\le \|Q_\tau y\|\|y\|$$
for all $y\in L^2(0,\tau;V)$, which shows that $\Ran Q_\tau$ is closed and that $Q_\tau$ is invertible with $\|Q_\tau^{-1}\|\le 1$. This completes the proof.
\end{proof}

Recall that an operator $T$ on a Banach space $X$ is said to be a \emph{Ritt operator} if $r(T)\le1$ and
$$\|R(\lambda,T)\|\le \frac{C}{|\lambda-1|},\quad |\lambda|>1,$$
for some constant $C>0$; see \cite{NaZe99}. It is shown in \cite{Lyu99, NaZe99} that $T$ is a Ritt operator if and only if $T$ is power-bounded and $\|T^n(I-T)\|=O(n^{-1})$ as $n\to\infty$. It is also known that a power-bounded operator is a Ritt operator if and only if $\sigma(T)\cap\T\subseteq\{1\}$ and \eqref{eq:res} holds with $\alpha=1$; see   \cite[Lemma~3.3]{CoLi16}. The next result provides the type of spectral information required in Theorem~\ref{thm:gen}; see \cite[Section V-1, Theorem 5.3]{BenDaP07book} for a related result on eigenvalues of monodromy operators. 

\begin{prp}\label{prp:spectrum}
Let $A(t)$, $t\ge0$, be as in \eqref{eq:sum} and suppose $T_0(\tau)=I$ for some $\tau>0$. Moreover, let $T=U(\tau,0)$.
Then 
$T$  is a Ritt operator
and 
\begin{equation}\label{eq:Fix}
\Fix T=\left\{x\in X:\int_0^\tau\|B(t)^*T_0(t)x\|^2\,\dd t=0\right\}.
\end{equation}
In particular, 
$\sigma(T)\cap\T\subseteq\{1\}$, and we have 
$1\not\in\ps(T)$ if and only if $(B^*,A_0)$ is approximately observable on $(0,\tau)$.
\end{prp}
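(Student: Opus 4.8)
The plan is to verify that the contraction $T=U(\tau,0)$ satisfies the two conditions in \cite[Lemma~3.3]{CoLi16} — $\sigma(T)\cap\T\subseteq\{1\}$ and $\|R(e^{i\theta},T)\|=O(|\theta|^{-1})$ as $\theta\to0$ — which then yields that $T$ is a Ritt operator (power-boundedness being automatic). Two preliminary observations will be used throughout. First, evaluating~\eqref{eq:vop} at $t=\tau$ and using $T_0(\tau)=I$ gives
$$(I-T)x=\int_0^\tau T_0(\tau-s)B(s)B(s)^*U(s,0)x\,\dd s,\qquad x\in X,$$
so by Cauchy--Schwarz and unitarity of $T_0$ one has $\|(I-T)x\|\le\|B\|_{L^2(0,\tau)}\big(\int_0^\tau\|B(s)^*U(s,0)x\|^2\,\dd s\big)^{1/2}$. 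Second, Lemma~\ref{lem:energy} identifies $\int_0^\tau\|B(s)^*U(s,0)x\|^2\,\dd s$ with $\tfrac12(\|x\|^2-\|Tx\|^2)$, and Lemma~\ref{lem:obs} compares it with $\int_0^\tau\|B(s)^*T_0(s)x\|^2\,\dd s$.

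For $\sigma(T)\cap\T\subseteq\{1\}$ I would argue as follows. Let $\lambda\in\sigma(T)\cap\T$. Since $T$ is a contraction, $\sigma(T)\subseteq\overline\DD$, so $\lambda$ is a boundary point of $\sigma(T)$ and hence an approximate eigenvalue; pick unit vectors $x_n$ with $\|(\lambda-T)x_n\|\to0$. Then $\|Tx_n\|\to|\lambda|=1$, so $\|x_n\|^2-\|Tx_n\|^2\to0$, and the two observations above give $\|(I-T)x_n\|\to0$. Therefore $\|(1-\lambda)x_n\|\le\|(I-T)x_n\|+\|(\lambda-T)x_n\|\to0$, which forces $\lambda=1$.

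The main obstacle is the resolvent estimate, for which I would show that $I-T$ is sectorial of some angle strictly less than $\pi/2$; power-boundedness of $T$ alone only gives the angle $\pi/2$, which is insufficient. Using the identity above (with $T_0(\tau-s)=T_0(-s)$) and the factorisation $\Phi_\tau=Q_\tau\Psi_\tau$ from the proof of Lemma~\ref{lem:obs}, one obtains $I-T=\Phi_\tau^*\Psi_\tau=\Psi_\tau^*Q_\tau^*\Psi_\tau$, where $(\Psi_\tau x)(t)=B(t)^*U(t,0)x$ and $Q_\tau=I+R_\tau$ satisfies $\R(Q_\tau y,y)\ge\|y\|^2$ for all $y$ together with $\|Q_\tau\|\le c_\tau$. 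Hence the numerical range of $Q_\tau^*$ lies in $\{\zeta\in\CC:\R\zeta\ge1,\ |\zeta|\le c_\tau\}$, and so in the closed sector $\Sigma_\omega=\{\zeta:|\arg\zeta|\le\omega\}$ with $\omega=\arccos(c_\tau^{-1})<\pi/2$; since $((I-T)x,x)=(Q_\tau^*\Psi_\tau x,\Psi_\tau x)$, the numerical range of $I-T$ also lies in $\Sigma_\omega$. A standard numerical-range argument then gives $\|(\zeta+(I-T))^{-1}\|\le\dist(\zeta,-\Sigma_\omega)^{-1}$ for all $\zeta\notin-\Sigma_\omega$. Writing $R(e^{i\theta},T)=(z_\theta+(I-T))^{-1}$ with $z_\theta=e^{i\theta}-1=2i\sin(\theta/2)e^{i\theta/2}$, a short trigonometric computation shows that for $0<|\theta|<\pi-2\omega$ one has $z_\theta\notin-\Sigma_\omega$ and $\dist(z_\theta,-\Sigma_\omega)\ge|z_\theta|\cos(\omega+|\theta|/2)$, which for small $|\theta|$ is bounded below by a fixed positive multiple of $|z_\theta|$, hence of $|\theta|$. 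This gives $\|R(e^{i\theta},T)\|=O(|\theta|^{-1})$, so by \cite[Lemma~3.3]{CoLi16} the operator $T$ is a Ritt operator, and in particular $\sigma(T)\cap\T\subseteq\{1\}$.

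Finally, I would obtain~\eqref{eq:Fix} by chaining the earlier results: if $Tx=x$ then $\|Tx\|=\|x\|$, so $\int_0^\tau\|B(t)^*U(t,0)x\|^2\,\dd t=0$ by Lemma~\ref{lem:energy} and then $\int_0^\tau\|B(t)^*T_0(t)x\|^2\,\dd t=0$ by Lemma~\ref{lem:obs}; conversely, if this last integral vanishes then so does $\int_0^\tau\|B(t)^*U(t,0)x\|^2\,\dd t$ by Lemma~\ref{lem:obs}, hence $\|Tx\|=\|x\|$ by Lemma~\ref{lem:energy}, so $U(t,0)x=T_0(t)x$ on $[0,\tau]$ by Remark~\ref{rem:const_energy} and in particular $Tx=U(\tau,0)x=T_0(\tau)x=x$. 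The last two assertions are then immediate, since $1\in\ps(T)$ if and only if $\Fix T\ne\{0\}$, which by~\eqref{eq:Fix} is precisely the failure of approximate observability of $(B^*,A_0)$ on $(0,\tau)$. (One should note for the write-up that sectoriality of $I-T$ does not by itself give $\sigma(T)\cap\T\subseteq\{1\}$ — it permits an arc of spectrum around $-1$ — so the approximate-eigenvalue argument above is genuinely needed.)
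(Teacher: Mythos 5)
Your proof is correct, and for the Ritt resolvent estimate it takes a genuinely different route from the paper. The steps for $\sigma(T)\cap\T\subseteq\{1\}$ (approximate-eigenvalue argument combined with Lemma~\ref{lem:energy} and the variation-of-parameters formula) and for the identity \eqref{eq:Fix} (chaining Lemmas~\ref{lem:energy} and~\ref{lem:obs}) match the paper's, modulo your use of Remark~\ref{rem:const_energy} in place of a direct appeal to~\eqref{eq:vop}. Where you diverge is the resolvent bound near $1$. The paper proceeds by an elementary pointwise estimate: for a unit vector $x$ it combines $|e^{i\theta}-1|\le\|e^{i\theta}x-Tx\|+|(Tx-x,x)|$ with a Cauchy--Schwarz bound on $|(Tx-x,x)|$ in terms of the two observability integrals, controlling the $U$-integral by $\|e^{i\theta}x-Tx\|$ via Lemma~\ref{lem:energy} and the $T_0$-integral by $c_\tau^2\|e^{i\theta}x-Tx\|$ via Lemma~\ref{lem:obs}, to arrive at $|e^{i\theta}-1|\le(1+c_\tau)\|e^{i\theta}x-Tx\|$. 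You instead exploit the factorisation $\Phi_\tau=Q_\tau\Psi_\tau$ structurally, writing $I-T=\Phi_\tau^*\Psi_\tau=\Psi_\tau^*Q_\tau^*\Psi_\tau$ and reading off that the numerical range of $I-T$ lies in the sector of half-angle $\arccos(c_\tau^{-1})<\pi/2$; the resolvent estimate then follows from the standard numerical-range bound. Both arguments rest on the same two facts extracted in the proof of Lemma~\ref{lem:obs}, namely $\R(Q_\tau y,y)\ge\|y\|^2$ and $\|Q_\tau\|\le c_\tau$, and give constants of comparable size, so neither is sharper; yours is conceptually cleaner in that it makes the sectoriality of $I-T$ at angle less than $\pi/2$ visible (a well-known reformulation of the Ritt condition), while the paper's is shorter and avoids the numerical-range machinery. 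Your closing caveat is correct and worth keeping: power-boundedness together with sectoriality of $I-T$ at angle less than $\pi/2$ does not by itself rule out spectrum on $\T$ near $-1$ (for instance $T=\mathrm{diag}(1/2,-1)$ on $\CC^2$ has $I-T$ positive but $-1\in\sigma(T)$), so the separate approximate-eigenvalue step is genuinely needed before invoking \cite[Lemma~3.3]{CoLi16}.
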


\begin{proof}
 We show first that $\sigma(T)\cap\T\subseteq\{1\}$. Indeed, since $T$ is a contraction we know that $r(T)\le1$, and hence if $\lambda\in\sigma(T)\cap\T$ then $\lambda$ must be an approximate eigenvalue of $T$. In particular, we may find vectors $x_n\in X$, $n\ge1$, such that $\|x_n\|=1$ for all $n\ge1$ and $\|Tx_n-\lambda x_n\|\to0$ as $n\to\infty$. By Lemma~\ref{lem:energy} we have
$$\int_0^\tau\|B(t)^*U(t,0)x_n\|^2\,\dd t=\frac{\|x_n\|^2-\|Tx_n\|^2}{2}\to0,\quad n\to\infty.$$
Since $T_0(\tau)=I$, it follows from \eqref{eq:vop} that
$$\|Tx_n-x_n\|^2\le \|B\|_{L^2(0,\tau)}^2\int_0^\tau\|B(t)^*U(t,0)x_n\|^2\,\dd t\to0,\quad n\to\infty,
$$ 
and hence
$$|1-\lambda|\le\|Tx_n-x_n\|+\|Tx_n-\lambda x_n\|\to0,\quad n\to\infty,$$
so that $\lambda=1$, as required. 

Next we establish that $T$ is a Ritt operator. To this end let $x\in X$ with $\|x\|=1$ and let $\theta\in[-\pi,\pi]$. We first observe that
\begin{equation}\label{eq:prel_est}
|e^{i\theta}-1|\le \|e^{i\theta}x-Tx\|+|(Tx-x,x)|.
\end{equation}
 Using \eqref{eq:vop} and the fact that $T_0(\tau)=I$, so that in particular $T_0(\tau-t)^*=T_0(t)$ for $t\in[0,\tau]$,  we have
$$\begin{aligned}
|(Tx-x,x)|^2&=\left|\int_0^\tau(T_0(\tau-t)B(t)B(t)^*U(t,0)x,x)\,\dd t\right|^2\\
&\le\left(\int_0^\tau\|B(t)^*U(t,0)x\|^2\,\dd t\right)\left(\int_0^\tau\|B(t)^*T_0(t)x\|^2\,\dd t\right).
\end{aligned}$$
By Lemma~\ref{lem:energy} and the reverse triangle inequality we see that
\begin{equation}\label{eq:URitt}
\int_0^\tau\|B(t)^*U(t,0)x\|^2\,\dd t=\frac{\|x\|^2-\|Tx\|^2}{2}\le \|e^{i\theta}x-Tx\|,
\end{equation}
and hence by Lemma~\ref{lem:obs}
\begin{equation}\label{eq:TRitt}
\int_0^\tau\|B(t)^*T_0(t)x\|^2\,\dd t\le c_\tau^2\|e^{i\theta}x-Tx\|.
\end{equation}
Combining \eqref{eq:URitt} and \eqref{eq:TRitt} in the previous estimate we find that $|(Tx-x,x)|\le c_\tau\|e^{i\theta}x-Tx\|$, and hence \eqref{eq:prel_est} gives
$$|e^{i\theta}-1|\le (1+c_\tau)\|e^{i\theta}x-Tx\|.$$
 It  follows that $\|R(e^{i\theta},T)\|=O(|\theta|^{-1})$ as $\theta\to0$, so $T$ is a Ritt operator. 

In order to characterise the set $\Fix T$, note first that if $Tx=x$ then by Lemmas~\ref{lem:energy} and \ref{lem:obs} we have 
\begin{equation}\label{eq:no_damping}
\int_0^\tau\|B(t)^*T_0(t)x\|^2\,\dd t=0.
\end{equation}
Conversely, suppose that $x\in X$ is such that \eqref{eq:no_damping} holds.
Then Lemma~\ref{lem:obs} shows that 
$$\int_0^\tau\|B(t)^*U(t,0)x\|^2\,\dd t=0,$$
and it follows from \eqref{eq:vop} that
$$\|Tx-x\|^2\le \|B\|_{L^2(0,\tau)}^2\int_0^\tau\|B(t)^*U(t,0)x\|^2\,\dd t =0,$$
and hence $Tx=x$. In particular, we obtain \eqref{eq:Fix}, and hence $1\not\in\ps(T)$ if and only if $(B^*,A_0)$ is approximately observable on $(0,\tau)$.
\end{proof}

\begin{rem}
  Note that even without the assumption $T_0(\tau)=I$, approximate observability of 
$(B^*,A_0)$ on $(0,\tau)$ implies that $\ps( T )\cap\T=\emptyset$ for $T=U(\tau,0)$. 
Indeed, if $(B^*,A_0)$ is approximately observable on $(0,\tau)$ so is $(B^*,A)$ by Lemma~\ref{lem:obs}. Hence by Lemma~\ref{lem:energy} we have $\|Tx\|<\|x\|$ for all $x\in X\setminus\{0\}$, and in particular $\ps(T)\cap\T=\emptyset$. It follows from the Arendt-Batty-Lyubich-V\~{u} theorem~\cite{AreBat88} that  if the evolution family $\EF$ is $\tau$-periodic then the system~\eqref{eq:naCP} is stable whenever
$(B^*,A_0)$ is approximately observable on $(0,\tau)$ and the boundary spectrum $\sigma(T)\cap \T$ is countable.
\end{rem}

The next result establishes a connection between exact observability and the spectral radius of
certain restrictions of the monodromy operator.

\begin{prp}\label{prp:exp}
Let $A(t)$, $t\ge0$, be as in \eqref{eq:sum} and suppose that $B$ is $\tau$-periodic for some $\tau>0$. Moreover, let $T=U(\tau,0)$  and suppose that $Z$ is a closed $T$-invariant subspace of $X$. Then $r(T|_Z)<1$ if and only if  $(B^*,A_0)$ is exactly $Z$-observable on $(0,n\tau)$ for some $n\in\NN$. If $T_0(\tau)=I$ then $r(T|_Z)<1$ if and only if $(B^*,A_0)$ is exactly $Z$-observable on $(0,\tau)$.
\end{prp}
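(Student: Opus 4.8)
The plan is to establish the equivalence via the relation between the powers of $T$ and the observability integrals supplied by Lemmas~\ref{lem:energy} and~\ref{lem:obs}. First I would prove the easier implication: suppose $r(T|_Z)<1$, so there exist $K>0$ and $r\in(0,1)$ with $\|(T|_Z)^n\|\le Kr^n$ for all $n\ge0$. Pick $n\in\NN$ with $Kr^n<1$. Iterating the energy identity of Lemma~\ref{lem:energy} along the $\tau$-periodic evolution family gives a telescoping sum
\begin{equation*}
\frac{\|x\|^2-\|T^nx\|^2}{2}=\sum_{k=0}^{n-1}\int_0^\tau\|B(t)^*U(t+k\tau,k\tau)U(k\tau,0)x\|^2\,\dd t=\int_0^{n\tau}\|B(t)^*U(t,0)x\|^2\,\dd t,
\end{equation*}
using $\tau$-periodicity of the evolution family to re-index each block and the cocycle property $U(k\tau,0)=T^k$ on $Z$. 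For $x\in Z$ this yields $\int_0^{n\tau}\|B(t)^*U(t,0)x\|^2\,\dd t\ge\frac12(1-K^2r^{2n})\|x\|^2$, and then Lemma~\ref{lem:obs} (applied on $(0,n\tau)$, noting $B$ is $\tau$-periodic hence in $L^2_{\mathrm{loc}}$) transfers exact $Z$-observability from $(B^*,A)$ to $(B^*,A_0)$ on $(0,n\tau)$.

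For the converse, suppose $(B^*,A_0)$ is exactly $Z$-observable on $(0,n\tau)$ for some $n$; by Lemma~\ref{lem:obs} the same holds for $(B^*,A)$, with some constant $\kappa>0$. The iterated energy identity above then gives, for $x\in Z$,
\begin{equation*}
\|T^nx\|^2=\|x\|^2-2\int_0^{n\tau}\|B(t)^*U(t,0)x\|^2\,\dd t\le(1-2\kappa^2)\|x\|^2,
\end{equation*}
so $\|(T|_Z)^n\|\le(1-2\kappa^2)^{1/2}<1$ (note $\kappa^2\le1/2$ automatically since $T$ is a contraction), whence $r(T|_Z)=\lim_m\|(T|_Z)^{nm}\|^{1/(nm)}<1$. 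Finally, when $T_0(\tau)=I$ one has to upgrade ``exactly $Z$-observable on $(0,n\tau)$ for some $n$'' to ``on $(0,\tau)$''. Here I would argue that since $T_0$ is $\tau$-periodic, the map $\Phi_{n\tau}\colon Z\to L^2(0,n\tau;V)$, $x\mapsto B(\cdot)^*T_0(\cdot)x$, decomposes over the $n$ time-blocks as $x\mapsto(B(\cdot)^*T_0(\cdot)T^{k}x)_{k=0}^{n-1}$ — wait, more carefully, using $T_0((k\tau)+t)=T_0(t)$ the $k$-th block is $t\mapsto B(t)^*T_0(t)U(k\tau,0)x=B(t)^*T_0(t)T^kx$ for the evolution family version; for the $A_0$-version one instead uses Lemma~\ref{lem:obs} to pass to the $A$-version, observe $T^kx\in Z$, and bound $\|T^kx\|\ge\|x\|-\|x-T^kx\|$, or alternatively note that the $A_0$-observability Gramian on $(0,n\tau)$ is dominated by $n$ copies of the one on $(0,\tau)$ after the substitution $t\mapsto t+k\tau$, so $n$-block exact observability of $(B^*,A_0)$ on $(0,n\tau)$ forces exact observability of $(B^*,A_0)$ on $(0,\tau)$ with constant $\kappa/\sqrt n$; combined with the first part and Proposition~\ref{prp:spectrum}'s machinery this closes the loop.

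The main obstacle is the last step: the passage from exact $Z$-observability on $(0,n\tau)$ to exact $Z$-observability on $(0,\tau)$ under the resonance hypothesis $T_0(\tau)=I$. The subtlety is that $Z$ is only assumed $T$-invariant, not $T_0$-invariant, so one cannot simply split the long-time Gramian into $n$ rigid copies of the short-time one acting on the \emph{same} vector $x$; the block at level $k$ sees $T^kx$ (respectively $U(k\tau,0)x$) rather than $x$. I would handle this by working throughout with the $A$-version (via Lemma~\ref{lem:obs}) where the block structure is genuinely $t\mapsto B(t)^*U(t,0)(T^kx)$ with $T^kx\in Z$, so that $n$-block exact observability of $(B^*,A)$ on $(0,n\tau)$ gives $\sum_{k=0}^{n-1}\int_0^\tau\|B(t)^*U(t,0)T^kx\|^2\,\dd t\ge\kappa^2\|x\|^2$; since each term is itself $\tfrac12(\|T^kx\|^2-\|T^{k+1}x\|^2)$ this is again just the telescoping identity and gives $\|T^nx\|^2\le(1-2\kappa^2)\|x\|^2$ directly, bypassing the need for single-step observability altogether. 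Thus the clean route is to prove $r(T|_Z)<1\iff(B^*,A)$ exactly $Z$-observable on $(0,n\tau)$ for some $n$ using only the telescoped Lemma~\ref{lem:energy}, then invoke Lemma~\ref{lem:obs} to swap $A$ for $A_0$, and for the $T_0(\tau)=I$ refinement observe that when $n=1$ already suffices the statement is immediate, while for $n>1$ one reduces to $n=1$ by replacing $\tau$ with $n\tau$ (still a period of $T_0$) and noting $r(T|_Z)<1\iff r((T^n)|_Z)<1$ — so in fact $r(T|_Z)<1$ is equivalent to exact $Z$-observability of $(B^*,A_0)$ on $(0,\tau)$ as soon as $T_0(\tau)=I$, because then exact $Z$-observability on $(0,n\tau)$ of the $n\tau$-system coincides with that on "one period" of that system.
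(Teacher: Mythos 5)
Your proof of the first equivalence is correct and essentially matches the paper's: the paper simply applies Lemma~\ref{lem:energy} directly with $n\tau$ in place of $\tau$ (the lemma holds for every $\tau>0$), whereas you re-derive the same identity by telescoping over $\tau$-blocks, but the estimates that follow are identical, and your remark that $\kappa^2\le1/2$ is automatic is correct.

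For the $T_0(\tau)=I$ refinement you muddle what is in fact a one-line argument and end on a route that does not close. The worry you raise — that $Z$ is only $T$-invariant, not $T_0$-invariant, so the long-time Gramian cannot be split into $n$ copies of the short-time one acting on the same $x$ — does not apply to the $A_0$-version: you are conflating the $A$-Gramian (built from $U(t,0)$, where the $k$-th block does see $T^kx$) with the $A_0$-Gramian (built from $T_0(t)$, where no dynamics on $Z$ intervene). For fixed $x$ the scalar function $t\mapsto\|B(t)^*T_0(t)x\|^2$ is $\tau$-periodic, since $B$ is $\tau$-periodic by hypothesis and $T_0(t+\tau)=T_0(t)T_0(\tau)=T_0(t)$. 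Hence
\[
\int_0^{n\tau}\|B(t)^*T_0(t)x\|^2\,\dd t = n\int_0^{\tau}\|B(t)^*T_0(t)x\|^2\,\dd t
\]
by a plain change of variables, with no invariance of $Z$ required, and exact $Z$-observability of $(B^*,A_0)$ on $(0,n\tau)$ with constant $\kappa$ immediately yields exact $Z$-observability on $(0,\tau)$ with constant $\kappa/\sqrt n$. This is the paper's entire argument; it does appear in your proposal, but only as a throwaway ``alternatively'' buried in a non-issue. Your final ``clean route'' — replacing $\tau$ by $n\tau$ and appealing to the one-period case of the restated system — is circular: it delivers exact $Z$-observability on $(0,n\tau)$, which you already had, and does not descend to the original interval $(0,\tau)$ without invoking precisely the periodicity identity above.
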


\begin{proof}
Let $S=T|_Z$. Then $r(S)<1$ if and only if there exists $n\in\NN$ such that $\|S^n\|=\|U(n\tau,0)|_Z\|<1$. Suppose that $n\in\NN$ is such that $\|S^n\|<1$ and let $x\in Z$. By Lemma~\ref{lem:energy} we have
$$\int_0^{n\tau}\|B(t)^*U(t,0)x\|^2\,\dd t=\frac{\|x\|^2-\|S^nx\|^2}{2}\ge\frac{1-\|S^n\|^2}{2}\|x\|^2,$$
and hence $(B^*,A)$ is exactly $Z$-observable on $(0,n\tau)$. By Lemma~\ref{lem:obs} the same is true of $(B^*,A_0)$. Now suppose conversely that $(B^*,A_0)$ is exactly $Z$-observable on $(0,n\tau)$ for some $n\in\NN$. By Lemma~\ref{lem:obs} the same is true of  $(B^*,A)$, and hence there exists a constant $\kappa>0$ such that
$$\int_0^{n\tau}\|B(t)^*U(t,0)x\|^2\,\dd t\ge \kappa^2\|x\|^2,\quad x\in Z.$$
Using Lemma~\ref{lem:energy} we deduce that
$$\|S^nx\|^2=\|x\|^2-2\int_0^{n\tau}\|B(t)^*U(t,0)x\|^2\,\dd t\le (1-2\kappa^2)\|x\|^2,\quad x\in Z,$$
and in particular $\|S^n\|<1$. Hence $r(S)<1$.  Suppose finally that $T_0(\tau)=I$ and that $r(S)<1$. If $n\in\NN$ is such that $\|S^n\|<1$, then by the first part we know that $(B^*,A_0)$ is exactly $Z$-observable on $(0,n\tau)$. Hence there exists a constant $\kappa>0$ such that 
$$\int_0^{n\tau}\|B(t)^*T_0(t)x\|^2\,\dd t\ge \kappa^2\|x\|^2,\quad x\in Z.$$
Since both $B$ and $T_0$ are $\tau$-periodic,
we have
$$\begin{aligned}
\int_0^{n\tau}\|B(t)^*T_0(t)x\|^2\,\dd t=n\int_0^{\tau}\|B(t)^*T_0(t)x\|^2\,\dd t.
\end{aligned}$$
Thus 
$$\int_0^{\tau}\|B(t)^*T_0(t)x\|^2\,\dd t\ge \frac{\kappa^2}{n}\|x\|^2,\quad x\in Z,$$
so $(B^*,A_0)$ is exactly $Z$-observable on $(0,\tau)$, as required.
\end{proof}

We now formulate a variant of Theorem~\ref{thm:gen} for 
operators $A(t)$, $t\ge0,$ which are of the form given in~\eqref{eq:sum}.

\begin{thm}\label{thm:special}
Suppose that the operators $A(t)$, $t\ge0,$ are as in \eqref{eq:sum} and that $B$ is $\tau$-periodic for some $\tau>0$. Suppose also that $T_0(\tau)=I$ and let $T=U(\tau,0)$ be the monodromy operator. Furthermore, let  $P$ denote the  orthogonal projection onto the closed subspace
$$Y=\left\{x\in X:\int_0^\tau\|B(t)^*T_0(t)x\|^2\,\dd t=0\right\}$$
of $X$ and let $Z=Y^\perp$. Then for any initial value $x\in X$ the solution $z\colon\RR_+\to X$ of \eqref{eq:naCP} satisfies
\begin{equation}\label{eq:asymp_spec}
\|z(t)-z_0(t)\|\to0,\quad t\to\infty,
\end{equation}
where $z_0\colon\RR_+\to X$ is the $\tau$-periodic solution of \eqref{eq:naCP} with initial condition $z_0(0)=Px$. In particular, the system \eqref{eq:naCP} is asymptotically periodic. The system is stable if and only if  $(B^*,A_0)$ is approximately observable on $(0,\tau)$.

Moreover,  
\begin{equation}\label{eq:exp_spec}
\|z(t)-z_0(t)\|\le Me^{-\beta t}\|x\|,\quad t\ge0,\, x\in X,
\end{equation}
for some  $M, \beta>0$ if and only if $(B^*,A_0)$ is exactly $Z$-observable on $(0,\tau)$. In any case, if $x\in X$ is such that $x-Px\in\Ran(I-T)^\gamma$ for some $\gamma>0$ then
\begin{equation}\label{eq:poly_spec}
\|z(t)-z_0(t)\|=o(t^{-\gamma}),\quad t\ge0.
\end{equation}
Furthermore, there exists a dense subspace $X_0$ of $X$ such that for all $x\in X_0$ the convergence in \eqref{eq:asymp_spec} is superpolynomially fast.
\end{thm}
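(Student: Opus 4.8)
The plan is to deduce Theorem~\ref{thm:special} from Theorem~\ref{thm:gen}, using Propositions~\ref{prp:spectrum} and~\ref{prp:exp} to translate the abstract spectral hypotheses and conclusions into the language of observability. So the real content is (i) checking that the hypotheses of Theorem~\ref{thm:gen} hold here with $\alpha=1$, and (ii) matching up the objects $\Fix T$, $\overline{\Ran(I-T)}$ and the ergodic projection with $Y$, $Z$ and the orthogonal projection $P$.

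First I would verify that the evolution family $\EF$ associated with \eqref{eq:naCP} is bounded and $\tau$-periodic. Boundedness is immediate since the $U(t,s)$ are contractions. For periodicity, note that $T_0(\tau)=I$ forces $T_0$ itself to be $\tau$-periodic via the group law, and since $B$ is $\tau$-periodic the family $\{A(t):t\ge0\}$ is $\tau$-periodic; a routine uniqueness argument applied to the variation of parameters formula \eqref{eq:vop} then yields $U(t+\tau,s+\tau)=U(t,s)$ for all $t\ge s\ge0$. Next, since $T_0(\tau)=I$, Proposition~\ref{prp:spectrum} shows that $T=U(\tau,0)$ is a Ritt operator; in particular $T$ is power-bounded, $\sigma(T)\cap\T\subseteq\{1\}$, and the resolvent estimate \eqref{eq:res} holds with $\alpha=1$. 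Thus all hypotheses of Theorem~\ref{thm:gen} are satisfied, with $\alpha=1$.

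It then remains to identify the data. By \eqref{eq:Fix} we have $\Fix T=Y$, and since $T$ is a contraction the mean-ergodic projection onto $\Fix T$ along $\overline{\Ran(I-T)}$ has norm at most $1$ and is therefore orthogonal, as recorded in Remark~\ref{rem:gen}; consequently $\overline{\Ran(I-T)}=(\Fix T)^\perp=Y^\perp=Z$ and the projection furnished by Theorem~\ref{thm:gen} is exactly the orthogonal projection $P$ onto $Y$. With this dictionary, \eqref{eq:asymp_spec} and the assertion of asymptotic periodicity are precisely \eqref{eq:asymp}, and the system is stable if and only if $\Fix T=Y=\{0\}$, which by the definition of $Y$ together with Lemma~\ref{lem:obs} is equivalent to approximate observability of $(B^*,A_0)$ on $(0,\tau)$. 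For the exponential estimate, recall that Theorem~\ref{thm:gen} gives \eqref{eq:exp} for some $M,\beta>0$ if and only if $\Ran(I-T)$ is closed, and that its proof shows this in turn is equivalent to $r(T|_Z)<1$; since $T_0(\tau)=I$, Proposition~\ref{prp:exp} identifies $r(T|_Z)<1$ with exact $Z$-observability of $(B^*,A_0)$ on $(0,\tau)$, establishing \eqref{eq:exp_spec}. The bound \eqref{eq:poly_spec} is \eqref{eq:poly} specialised to $\alpha=1$ (and is to be read as an asymptotic statement as $t\to\infty$), and the dense subspace $X_0$ with superpolynomially fast convergence is supplied directly by the last assertion of Theorem~\ref{thm:gen}.

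The argument is thus largely a bookkeeping exercise and I do not expect a serious obstacle; the only points needing a little care are the verification that $\EF$ is genuinely $\tau$-periodic, which relies on uniqueness for \eqref{eq:vop}, and the identification $Z=\overline{\Ran(I-T)}$, which uses contractivity of $T$ to ensure the ergodic projection is orthogonal. Everything else is an application of the results already proved.
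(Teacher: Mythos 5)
Your proposal is correct and follows essentially the same route as the paper's own proof: deduce everything from Theorem~\ref{thm:gen} with $\alpha=1$, using Proposition~\ref{prp:spectrum} to identify $\Fix T=Y$ and to supply the Ritt estimate, Remark~\ref{rem:gen}(c) to identify $\overline{\Ran(I-T)}=Z$ and the ergodic projection with the orthogonal projection onto $Y$, and Proposition~\ref{prp:exp} together with the observation (from the proof of Theorem~\ref{thm:gen}) that closedness of $\Ran(I-T)$ is equivalent to $r(T|_Z)<1$ for the exponential dichotomy. Two small remarks: the explicit verification that $\EF$ is $\tau$-periodic is a useful addition that the paper leaves implicit from the setup in Section~\ref{sec:dissip}; and the appeal to Lemma~\ref{lem:obs} for the stability characterisation is unnecessary, since $Y$ is already defined directly via $A_0$, so $Y=\{0\}$ is literally the definition of approximate observability of $(B^*,A_0)$ on $(0,\tau)$.
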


\begin{proof}
The result follows immediately from Theorem~\ref{thm:gen}, Proposition~\ref{prp:spectrum} and Proposition~\ref{prp:exp}. Indeed, Proposition~\ref{prp:spectrum} shows that the monodromy operator $T$ is a Ritt operator, so that $\sigma(T)\cap\T\subseteq\{1\}$ and \eqref{eq:res} holds for $\alpha=1$, and moreover that $Y=\Fix T$, so that the system is stable if and only if $(B^*,A_0)$ is approximately observable on $(0,\tau)$. Note that by Remark~\ref{rem:gen}\eqref{it:proj}  the closure of $\Ran(I-T)$ coincides with the orthogonal complement $Z$ of $\Fix T$. From the proof of Theorem~\ref{thm:gen} it is clear that $\Ran(I-T)$ is closed if and only if the restriction $S=T|_Z$ of the monodromy operator $T$ to $Z$ satisfies $r(S)<1$. Hence by Proposition~\ref{prp:exp} the estimate in \eqref{eq:exp_spec} holds for some  $M,\beta>0$ if and only if $(B^*,A_0)$ is exactly $Z$-observable on $(0,\tau)$. The result now follows from Theorem~\ref{thm:gen}.
\end{proof}

\begin{cor}\label{cor:stable}
In the setting of Theorem~\textup{\ref{thm:special}}, the system \eqref{eq:naCP} is stable if and only if $(B^*,A_0)$ is approximately observable on $(0,\tau)$. Moreover,   
$$\|z(t)\|\le M e^{-\beta t}\|x\|,\quad t\ge0,\, x\in X,$$
for some  $M,\beta>0$ if and only if $(B^*,A_0)$ is exactly observable on $(0,\tau)$.
\end{cor}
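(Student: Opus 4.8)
The plan is to deduce Corollary~\ref{cor:stable} from Theorem~\ref{thm:special} by observing that both of the conditions occurring in the corollary force the system to be stable, in which case the limiting periodic solution $z_0$ vanishes identically and the statements of Theorem~\ref{thm:special} simplify accordingly.

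The first assertion requires nothing new: it is precisely the stability statement already contained in Theorem~\ref{thm:special}, which itself rests on the identification $Y=\Fix T$ furnished by Proposition~\ref{prp:spectrum}. For the second assertion, I would first note that in either direction the system is stable. If $\|z(t)\|\le Me^{-\beta t}\|x\|$ for all $x\in X$, stability is immediate. Conversely, if $(B^*,A_0)$ is exactly observable on $(0,\tau)$ then it is in particular approximately observable — the defining inequality forces the vanishing of $\int_0^\tau\|B(t)^*U(t,0)x\|^2\,\dd t$ to imply $x=0$ — so the system is stable by the first assertion.

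Once stability is in hand, Proposition~\ref{prp:spectrum} gives $Y=\Fix T=\{0\}$, whence $Z=Y^\perp=X$ and the orthogonal projection $P$ onto $Y$ is the zero operator. Consequently $z_0(0)=Px=0$, and since the solution of \eqref{eq:naCP} with zero initial data is identically zero we have $z_0\equiv0$, so that $z(t)-z_0(t)=z(t)$ for all $t\ge0$. Applying the equivalence \eqref{eq:exp_spec} of Theorem~\ref{thm:special} now shows that $\|z(t)\|=\|z(t)-z_0(t)\|\le Me^{-\beta t}\|x\|$ for some $M,\beta>0$ if and only if $(B^*,A_0)$ is exactly $Z$-observable on $(0,\tau)$; since $Z=X$, this is exactly observability on $(0,\tau)$, which is the claim. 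I do not expect any genuine obstacle here; the only point needing a moment's care is the remark that both hypotheses of the second equivalence already entail stability, which is what makes the $Z$-observability condition of Theorem~\ref{thm:special} collapse to full observability and the periodic part $z_0$ disappear.
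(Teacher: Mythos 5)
Your proof is correct and is essentially the intended (and only natural) deduction: the first assertion is verbatim contained in Theorem~\ref{thm:special}, and for the second you correctly observe that both sides of the equivalence force stability, hence $Y=\Fix T=\{0\}$, $Z=X$, $P=0$ and $z_0\equiv0$, so that the exact $Z$-observability condition of \eqref{eq:exp_spec} collapses to exact observability on all of $X$ and $\|z(t)-z_0(t)\|$ reduces to $\|z(t)\|$. The paper presents the corollary without separate proof precisely because this short argument is immediate; one trivial slip in your write-up is that exact observability of $(B^*,A_0)$ is defined via $T_0(t)$ rather than $U(t,0)$, but by Lemma~\ref{lem:obs} this makes no difference.
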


\section{The transport equation}\label{sec:transport}

Let $\Omega=(0,1)\times\RR$ and $\Omega_+=(0,1)\times(0,\infty)$,  and consider the following initial-value problem for the transport equation subject to periodic boundary conditions,
\begin{equation}\label{eq:transport}
\left\{\begin{aligned}
z_t(s,t)&=z_s(s,t)-b(s,t)z(s,t),\quad & (s,t)\in\Omega_+,\\
z(0,t)&=z(1,t), &t>0,\\
z(s,0)&=x(s), & s\in(0,1),
\end{aligned}\right.
\end{equation}
where $x\in L^2(0,1)$ and $b\in L^\infty(\Omega)$ are given. We suppose that the damping term $b$ is 1-periodic in $t$ and that $b(s,t)\ge0$ for almost all $(s,t)\in\Omega$.  The problem can be cast in the form of \eqref{eq:naCP} with $A(t)$, $t\ge0$, as in \eqref{eq:sum}  by letting $X=L^2(0,1)$, $A_0x=x'$ for $x\in D(A_0)=\{x\in H^1(0,1):x(0)=x(1)\}$ and $B(t)x=b(\cdot,t)^{1/2}x$ for $t\ge0$ and  $x\in X$. Notice in particular that $A_0$ is the generator of the unitary group $(T_0(t))_{t\in\RR}$ given by $T_0(t)x=x(\cdot+t)$ for $x\in X$ and  $t\in\RR$. Here and in the remainder of this section any function on $(0,1)$ is identified with its 1-periodic extension to $\RR$. In particular, we have $T_0(1)=I$. The unique mild solution $z\colon\RR_+\to X$ of \eqref{eq:naCP} in the sense of \eqref{eq:vop} is given  by 
\begin{equation*}\label{eq:sol}
z(s,t)=x(s+t)\exp\left(-\int_0^tb(s+t-r,r)\,\dd r\right),\quad (s,t)\in\Omega_+.
\end{equation*}
Hence the monodromy operator $T=U(1,0)$ of the evolution family associated with the family $\{A(t):t\ge0\}$ is the multiplication operator corresponding to the function $m\in L^\infty(0,1)$ given by $m(s)=\exp(-a(s))$, where
$$a(s)=\int_0^1b(s-r,r)\,\dd r,\quad s\in(0,1).$$
Define, modulo null sets,  $I_a=\{s\in(0,1):a(s)>0\}$ and $J_a=\{s\in(0,1):a(s)=0\}$. Then $\Fix T=L^2(J_a)$ and the orthogonal projection $P$ onto $\Fix T$ is given simply by $Px=\mathbbm{1}_{J_a} x$, $x\in X$. Similarly, the orthogonal complement $Z=(\Fix T)^\perp$ of $\Fix T$ is given by $Z=L^2(I_a)$. In particular, $(B^*,A_0)$ is approximately observable on $(0,1)$ if and only if $J_a$ is a null set, and $(B^*,A_0)$ is exactly $Z$-observable on $(0,1)$ if and only if $a(s)\ge c$ for almost all $s\in I_a$ and some $c>0$ . For $\gamma>0$ we have 
$$\Ran (I-T)^\gamma=\{x\in Z:(1-m)^{-\gamma}x\in X\}=\{x\in Z:a^{-\gamma}x\in X\}.$$ 
These observations lead to the following special case of Theorem~\ref{thm:special}.

\begin{thm}\label{thm:transp}
Let $X=L^2(0,1)$ and let $I_a,J_a\subseteq(0,1)$ be as above. For any initial value $x\in X$ the solution $z\colon\RR_+\to X$ of the problem \eqref{eq:naCP} corresponding to \eqref{eq:transport} satisfies
\begin{equation}\label{eq:asymp_transp}
\|z(t)-(\mathbbm{1}_{J_a} x)(\cdot+t)\|\to0,\quad t\to\infty.
\end{equation}
In particular, the system \eqref{eq:naCP} is asymptotically periodic and it is stable if and only if  $J_a$ is a null set. 

Moreover, 
\begin{equation}\label{eq:exp_transp}
\|z(t)-(\mathbbm{1}_{J_a} x)(\cdot+t)\|\le Me^{-\beta t}\|x\|,\quad t\ge0,\, x\in X,
\end{equation}
for some  $M, \beta>0$ if and only if  $a(s)\ge c$ for almost all $s\in I_a$ and some $c>0$. In any case, if $x\in X$ is such that $a^{-\gamma}\mathbbm{1}_{I_a}x\in X$ for some $\gamma>0$ then
\begin{equation*}\label{eq:poly_transp}
\|z(t)-(\mathbbm{1}_{J_a} x)(\cdot+t)\|=o(t^{-\gamma}),\quad t\to\infty.
\end{equation*}
Furthermore, if $x$ lies in the dense subspace of functions satisfying $a^{-k}\mathbbm{1}_{I_a}x\in X$ for all  $k\ge1$ then the convergence in \eqref{eq:asymp_transp} is superpolynomially fast.
\end{thm}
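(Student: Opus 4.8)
The plan is to obtain Theorem~\ref{thm:transp} as a direct specialisation of Theorem~\ref{thm:special}, so that the whole task reduces to checking that the transport problem \eqref{eq:transport} fits the dissipative framework of Section~\ref{sec:dissip}, and then to translating the abstract objects $\Fix T$, $Z$, the two observability conditions and the fractional ranges $\Ran(I-T)^\gamma$ into the concrete quantities $J_a$, $I_a$ and $a$.

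First I would verify the framework. Identifying functions on $(0,1)$ with their $1$-periodic extensions, $X=L^2(0,1)\cong L^2(\RR/\ZZ)$, and $A_0x=x'$ on $D(A_0)=\{x\in H^1(0,1):x(0)=x(1)\}$ is the skew-adjoint generator of the translation group $T_0(t)x=x(\cdot+t)$, which is unitary and satisfies $T_0(1)=I$. Since $b\in L^\infty(\Omega)$ is non-negative and $1$-periodic in $t$, the multiplication operators $B(t)x=b(\cdot,t)^{1/2}x$ define an element $B\in L^2_{\mathrm{loc}}(\RR_+;\B(X))$ which is $1$-periodic, with $B(t)^*=B(t)$; thus \eqref{eq:sum} holds with $\tau=1$ and $V=X$. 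Next I would check that the function $z$ given by the stated characteristics formula satisfies the variation of parameters formula \eqref{eq:vop} (either by differentiating along characteristics or by substituting into \eqref{eq:vop} and applying Fubini), which identifies $z$ as the mild solution of \eqref{eq:naCP}; evaluating at $t=1$ and using the $1$-periodicity of $x$ and $b$ then shows that the monodromy operator $T=U(1,0)$ is multiplication by $m=e^{-a}$.

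With this in place the identifications are mechanical. By Proposition~\ref{prp:spectrum}, $\Fix T=\{x\in X:\int_0^1\|B(t)^*T_0(t)x\|^2\,\dd t=0\}$, and a Fubini argument combined with the substitution $\sigma=s+t$ gives
$$\int_0^1\|B(t)^*T_0(t)x\|^2\,\dd t=\int_0^1 a(s)|x(s)|^2\,\dd s,\qquad x\in X.$$
Hence $\Fix T=L^2(J_a)$, $Z=(\Fix T)^\perp=L^2(I_a)$, and the orthogonal projection onto $\Fix T$ is $Px=\mathbbm{1}_{J_a}x$; moreover $Px\in\Fix T$ gives $\|U(1,0)Px\|=\|Px\|$, so by Remark~\ref{rem:const_energy} the periodic solution with initial value $Px$ is $z_0(t)=T_0(t)Px=(\mathbbm{1}_{J_a}x)(\cdot+t)$. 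The displayed identity also shows that $(B^*,A_0)$ is approximately observable on $(0,1)$ exactly when $J_a$ is a null set, and exactly $Z$-observable on $(0,1)$ exactly when $a$ is bounded away from $0$ on $I_a$, i.e.\ $a(s)\ge c$ for a.e.\ $s\in I_a$ and some $c>0$. For the fractional ranges, $I-T$ is multiplication by $1-e^{-a}$, a bounded non-negative self-adjoint operator, so $(I-T)^\gamma$ is multiplication by $(1-e^{-a})^\gamma$; since $a$ takes values in $[0,\|b\|_{L^\infty}]$ and $u\mapsto(1-e^{-u})/u$ is continuous and bounded above and below by positive constants on $(0,\|b\|_{L^\infty}]$, one has $(1-e^{-a})^{-\gamma}x\in X$ if and only if $a^{-\gamma}x\in X$ for $x\in Z$. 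Thus $x-Px=\mathbbm{1}_{I_a}x$ lies in $\Ran(I-T)^\gamma$ if and only if $a^{-\gamma}\mathbbm{1}_{I_a}x\in X$, and similarly $\bigcap_{k\ge1}\big(\Fix T\oplus\Ran(I-T)^k\big)=\{x\in X:a^{-k}\mathbbm{1}_{I_a}x\in X\text{ for all }k\ge1\}$.

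Finally I would invoke Theorem~\ref{thm:special}, noting that $T$ is a Ritt operator by Proposition~\ref{prp:spectrum}, so that \eqref{eq:res} holds with $\alpha=1$. This yields \eqref{eq:asymp_transp} with the limit profile just computed, the stability dichotomy, the equivalence of \eqref{eq:exp_transp} with exact $Z$-observability on $(0,1)$ (hence with $a\ge c$ a.e.\ on $I_a$), the rate $o(t^{-\gamma})$ under the condition $a^{-\gamma}\mathbbm{1}_{I_a}x\in X$, and superpolynomial convergence on the dense subspace $X_0=\bigcap_{k\ge1}\big(\Fix T\oplus\Ran(I-T)^k\big)$ identified above. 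I do not expect a genuine obstacle here: the only steps that are not purely formal are the verification of \eqref{eq:vop} for the explicit solution and the Fubini/change-of-variables identity for the observation integral, both of which are standard method-of-characteristics computations, together with the elementary comparison between $1-e^{-a}$ and $a$ on $I_a$. The substantive content is already contained in Theorem~\ref{thm:special}.
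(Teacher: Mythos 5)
Your proposal is correct and follows essentially the same route as the paper: cast the transport problem in the dissipative framework of Section~\ref{sec:dissip}, identify the monodromy operator as multiplication by $e^{-a}$, compute $\Fix T$, $Z$, $P$ and $\Ran(I-T)^\gamma$ in terms of $I_a$, $J_a$ and $a$, translate the observability conditions via the Fubini identity $\int_0^1\|B(t)^*T_0(t)x\|^2\,\dd t=\int_0^1 a(s)|x(s)|^2\,\dd s$, and then invoke Theorem~\ref{thm:special}. The only difference is that you spell out a few details (the verification of \eqref{eq:vop} for the characteristics formula, and the comparability of $1-e^{-a}$ with $a$ on $I_a$) that the paper leaves implicit.
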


We illustrate Theorem~\ref{thm:transp} in the case where $b=\mathbbm{1}_\omega$ is an indicator function of some measurable subset $\omega$ of $\Omega$. To ensure periodicity of our system we assume that  $\omega$ is translation invariant in the $t$-direction, so that $\omega+\{(0,1)\}=\omega.$ Thus $b$ is completely described by the set $\omega_0=\omega\cap(0,1)^2$. 

\begin{figure}[ht]
  \begin{minipage}[c]{.49\linewidth}
    \begin{center}
      \includegraphics[width=.81\linewidth]{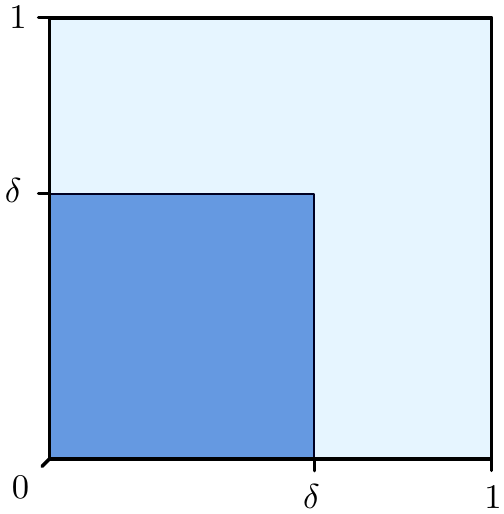}
    \end{center}
  \end{minipage}
  \hfill
  \begin{minipage}[c]{.49\linewidth}
    \begin{center}
      \includegraphics[width=.8\linewidth]{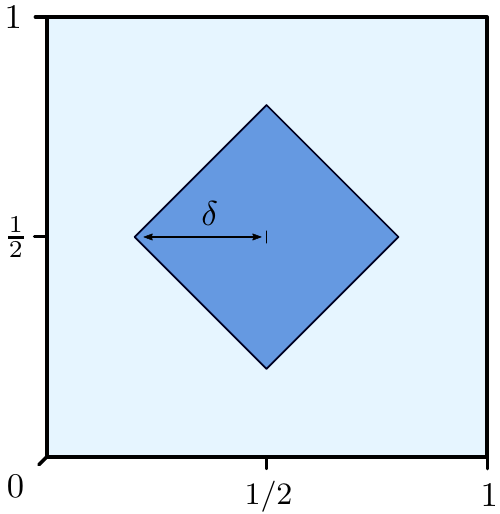}
    \end{center}
  \end{minipage}

    \caption{The damping regions $\omega_0$ for Examples~\ref{ex:TranspSquare} and~\ref{ex:TranspDiamond}\\ with $s$ on the horizontal axis and $t$ on the vertical axis. }
      \label{fig:Transport}
\end{figure}

\begin{ex}
  \label{ex:TranspSquare}
Suppose that 
$$\omega_0=\big\{(s,t)\in(0,1)^2:|s-1/2|+|t-1/2|<\delta\big\}$$
for some  $\delta\in[0,1/2]$. 
Then $a=(\delta/2)\mathbbm{1}_{(1-2\delta,1)}$, so $I_a=(1-2\delta,1)$ and $J_a=(0,1-2\delta)$. Thus the system \eqref{eq:naCP} corresponding to \eqref{eq:transport} is stable if and only if $\delta=1/2$, and in any case \eqref{eq:exp_transp} holds for some $M,\beta>0$.
\end{ex}

\begin{ex}
  \label{ex:TranspDiamond}
Suppose that 
$$\omega_0=\{(s,t)\in(0,1)^2:0<s,t<\delta\}$$
for some  $\delta\in[0,1]$. 
For $\delta\in[0,1/2)$ we have
 $$a(s)=\left\{\begin{aligned}
&s,\qquad& 0<s<\delta,\\
&2\delta-s,& \delta< s<2\delta,\\
&0,& 2\delta<s<1.
\end{aligned}\right.
$$
and for $\delta\in[1/2,1]$ we have
$$a(s)=\left\{\begin{aligned}
&2\delta-1,\quad& 0<s< 2\delta-1,\\
&s,\qquad& 2\delta-1<s<\delta,\\
&2\delta-s,& \delta< s<1.
\end{aligned}\right.$$
Thus for $\delta\in[0,1/2)$ we have $I_a=(0,2\delta)$ and $J_a=(2\delta,1)$, while for  $\delta\in[1/2,1]$ we have $I_a=(0,1)$ and $J_a=\emptyset$, so the system \eqref{eq:naCP} corresponding to \eqref{eq:transport} is stable if and only if $\delta\in[1/2,1]$. When $\delta\in(1/2,1]$ the solution $z\colon\RR_+\to X$ satisfies $\|z(t)\|\le Me^{-\beta t}$, $t\ge0$, for some  $M,\beta>0$, whereas for $\delta=1/2$ no such constants exist. In this case, however, we have $\|z(t)\|=o(t^{-\gamma})$ as $t\to\infty$ for $\gamma>0$ provided 
\begin{equation}\label{eq:poly_transp_ex}
\int_0^1\frac{|x(s)|^2}{(\min\{s,1-s\})^{2\gamma}}\,\dd s<\infty,
\end{equation}
and the convergence is superpolynomially fast if \eqref{eq:poly_transp_ex} holds for all $\gamma\in\NN$. This is the case in particular  
if there exists $\varepsilon>0$ such that $x(s)=0$ for almost all $s\in(0,\varepsilon)\cup(1-\varepsilon,1)$.
For $\delta\in[0,1/2)$ the system fails to be stable but it is still asymptotically periodic, and in fact $\|z(t)-(\mathbbm{1}_{J_a}x)(\cdot+t)\|\to0$ as $t\to\infty$. The convergence is not uniformly exponentially fast, but for $\gamma>0$ and $x\in X$ such that
\begin{equation}\label{eq:poly_transp_ex2}
\int_0^{2\delta}\frac{|x(s)|^2}{(\min\{s,2\delta-s\})^{2\gamma}}\,\dd s<\infty,
\end{equation}
we have $\|z(t)-(\mathbbm{1}_{J_a}x)(\cdot+t)\|=o(t^{-\gamma})$ as $t\to\infty$, and the convergence is superpolynomially fast if \eqref{eq:poly_transp_ex2} holds for all $\gamma\in\NN$, as is the case in particular if there exists $\varepsilon>0$ such that $x(s)=0$ for almost all $s\in(0,\varepsilon)\cup(2\delta-\varepsilon,2\delta)$.
\end{ex}

\section{The time-dependent damped wave equation}\label{sec:wave}

We return finally to the time-dependent damped wave equation introduced in Section~\ref{sec:intro}. Let $\Omega=(0,1)\times\RR$ and $\Omega_+=(0,1)\times(0,\infty)$, and assume that $b\in L^\infty(\Omega)$ with $b(s,t)\ge0$ for almost all $(s,t)\in\Omega$. Then the problem can be written in the form of \eqref{eq:naCP} with operators $A(t)$, $t\ge0$, as in \eqref{eq:sum} by choosing $X=H_0^1(0,1)\times L^2(0,1)$, $A_0x=(v,u'')^T$ for $x=(u,v)^T\in D(A_0)=(H^2(0,1)\cap H_0^1(0,1))\times H_0^1(0,1)$ and $B(t)x=(0,b(\cdot,t)^{1/2}v)^T$ for $x=(u,v)^T\in X$ and $t\ge0$. Note in particular that the unitary group $(T_0(t))_{t\in\RR}$ generated by $A_0$ satisfies $T_0(2)=I$ since solutions of the undamped wave equation on $(0,1)$ are 2-periodic. Indeed,  the undamped wave equation with initial data $x=(u,v)^T\in X$ can be solved explicitly using d'Alembert's formula, which in this case gives
\begin{equation}\label{eq:d'Alem}
z(s,t)=\frac{\tilde{u}(s+t)+\tilde{u}(s-t)}{2}+\frac12\int_{s-t}^{s+t}\tilde{v}(r)\,\dd r,\quad (s,t)\in\Omega,
\end{equation}
where $\tilde{u}$ and $\tilde{v}$ are the odd 2-periodic extensions to $\RR$ of $u$ and $v$, respectively. Furthermore, the energy of the solution $z\colon \RR_+\to X$ of \eqref{eq:naCP} satisfies 
$$E(t)=\frac12\|z(t)\|^2,\quad t\ge0.$$

Consider the special case where $b=\mathbbm{1}_\omega$ and suppose that $\omega$ is (up to a null set) an open subset of $\Omega$. Given $\tau>0$, let $\omega_\tau=\omega\cap((0,1)\times(0,\tau))$. We say that $\omega$ satisfies the \emph{geometric control condition (GCC) on $(0,\tau)$} if every characteristic ray intersects $\omega_\tau$. It follows from \cite[Theorem~1.8]{RLTT16} that $(B^*,A_0)$ is exactly observable on $(0,\tau)$ provided $\omega$ satisfies the GCC on $(0,\tau)$. Now suppose that $\omega$ is $\tau$-translation-invariant in the sense that $\omega+\{(0,\tau)\}=\omega$. It follows from Kronecker's theorem and a simple compactness argument that if $\tau$ is irrational then  $\omega$ satisfies the GCC on $(0,n\tau)$ for some $n\in\NN$. Hence by Proposition~\ref{prp:exp} our system is necessarily uniformly exponentially stable for such $\tau$. Since our main interest here is in non-uniform rates of convergence, we restrict our attention to the case where $\tau\in\QQ$. In fact, replacing $\tau$ by $n\tau$ for suitable $n\in\NN$ we may  further assume that we are in the resonant case where $T_0(\tau)=I$. 
We therefore assume henceforth, without essential loss of generality, that $\tau=2$. 
It then follows from Proposition~\ref{prp:spectrum} that the associated monodromy operator $U(2,0)$ is a Ritt operator.
Letting $\Omega_0=\{(s,t)\in\Omega:1<t<2\}$, we obtain the following version of Theorem~\ref{thm:gen}.

\begin{thm}\label{thm:wave}
Consider the system \eqref{eq:naCP} corresponding to the damped wave equation. Suppose that that $b$ is $2$-periodic in $t$ and let 
$$Y=\left\{x\in X:\iint_{\Omega_0}b(s,t)|v(s,t;x)|^2\,\dd (s,t)=0\right\}$$
of $X$, where $v(\cdot,\cdot;x)$ is the velocity component of the solution to the undamped wave equation on $(0,1)$ with initial data $x\in X$. Let $Z=Y^\perp$ and let $P$ denote the  orthogonal projection onto $Y$. Then for any initial value $x\in X$ the solution $z\colon\RR_+\to X$ of \eqref{eq:naCP} satisfies
\begin{equation}\label{eq:asymp_wave}
\|z(t)-z_0(t)\|\to0,\quad t\to\infty,
\end{equation}
where $z_0\colon\RR_+\to X$ is the  solution of the undamped wave equation with initial condition $z_0(0)=Px$. In particular, the system is asymptotically periodic, and it is stable if and only if  $Y=\{0\}$.

Moreover, 
\begin{equation}\label{eq:exp_wave}
\|z(t)-z_0(t)\|\le Me^{-\beta t}\|x\|,\quad t\ge0,\, x\in X,
\end{equation}
for some  $M, \beta>0$ if and only if  
\begin{equation}\label{eq:obs_wave}
\iint_{\Omega_0}b(s,t)|v(s,t;x)|^2\,\dd (s,t)\ge\kappa^2\|x\|^2
\end{equation}
for all $x\in Z$ and some $\kappa>0$. If $b=\mathbbm{1}_\omega$ for some open $2$-translation-invariant subset $\omega$ of $\Omega$, then the estimate in \eqref{eq:obs_wave} is satisfied for all $x\in X$ provided $\omega$ satisfies the GCC on $(0,2)$.  In any case,
there exists a dense subspace $X_0$ of $X$ such that for all $x\in X_0$ the convergence in \eqref{eq:asymp_wave} is superpolynomially fast.
\end{thm}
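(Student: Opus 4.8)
The plan is to obtain Theorem~\ref{thm:wave} directly from Theorem~\ref{thm:special} applied with $\tau=2$; the work then reduces to checking that the damped wave equation fits the abstract framework of Section~\ref{sec:dissip} and to translating the abstract objects in Theorem~\ref{thm:special} into the concrete quantities appearing in the statement.

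First I would record that the abstract hypotheses hold. The operator $A_0$ given by $A_0(u,v)^T=(v,u'')^T$ is skew-adjoint on $X=H_0^1(0,1)\times L^2(0,1)$ equipped with the energy inner product, hence generates a unitary group $(T_0(t))_{t\in\RR}$, and since solutions of the undamped wave equation on $(0,1)$ are given by d'Alembert's formula \eqref{eq:d'Alem} and are $2$-periodic we have $T_0(2)=I$; much of this is already noted in the text preceding the theorem. Since $b\in L^\infty(\Omega)$ with $b\ge0$, the map $t\mapsto B(t)$, with $B(t)(u,v)^T=(0,b(\cdot,t)^{1/2}v)^T$, lies in $L^\infty_{\mathrm{loc}}(\RR_+;\B(X))\subseteq L^2_{\mathrm{loc}}$; a short calculation gives $B(t)^*(f,g)^T=(0,b(\cdot,t)^{1/2}g)^T$, so $A(t)=A_0-B(t)B(t)^*$ acts as $(u,v)^T\mapsto(v,u''-b(\cdot,t)v)^T$, which is precisely the damped wave operator, and $2$-periodicity of $b$ makes $B$ (and hence the family $\{A(t)\}$) $2$-periodic. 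Thus the operators $A(t)$ are of the form \eqref{eq:sum} with $\tau=2$, and the mild solution of \eqref{eq:naCP} supplied by the variation of parameters formula \eqref{eq:vop} is the object to which Theorem~\ref{thm:special} refers.

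The one genuine computation is the identification of $B(t)^*T_0(t)x$. Writing $T_0(t)x=(z(\cdot,t;x),v(\cdot,t;x))^T$ for the undamped solution with data $x$, so that $v(\cdot,t;x)$ is its velocity component, we obtain $B(t)^*T_0(t)x=(0,b(\cdot,t)^{1/2}v(\cdot,t;x))^T$ and hence
$$\int_0^2\|B(t)^*T_0(t)x\|^2\,\dd t=\iint_{\Omega_0}b(s,t)|v(s,t;x)|^2\,\dd (s,t),\quad x\in X.$$
Consequently: the set $Y$ in the statement coincides with $\Fix T$ by formula \eqref{eq:Fix} of Proposition~\ref{prp:spectrum}; the condition $Y=\{0\}$ is precisely approximate observability of $(B^*,A_0)$ on $(0,2)$; and \eqref{eq:obs_wave} (for all $x\in Z$) is precisely exact $Z$-observability of $(B^*,A_0)$ on $(0,2)$, where, as in Theorem~\ref{thm:special}, $Z=Y^\perp$ is the subspace onto which $I-P$ projects, equal to the closure of $\Ran(I-T)$. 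Moreover, for $x\in Y=\Fix T$ Remark~\ref{rem:const_energy} shows that the solution of \eqref{eq:naCP} with data $x$ is undamped, i.e.\ equals $T_0(\cdot)x$, so the $\tau$-periodic solution $z_0$ of Theorem~\ref{thm:special} with $z_0(0)=Px$ is exactly the solution of the undamped wave equation started from $Px$. With these identifications in hand, the convergence \eqref{eq:asymp_wave}, asymptotic periodicity, the stability dichotomy, the characterisation \eqref{eq:exp_wave} of uniform exponential convergence, and the existence of the dense subspace $X_0$ all follow from Theorem~\ref{thm:special} (which in turn rests on Proposition~\ref{prp:spectrum} and Proposition~\ref{prp:exp}).

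Finally, the only assertion not contained in Theorem~\ref{thm:special} is that, when $b=\mathbbm{1}_\omega$ with $\omega$ open and $2$-translation-invariant, validity of the GCC on $(0,2)$ forces \eqref{eq:obs_wave} for every $x\in X$; by the identification above this is the statement that $(B^*,A_0)$ is exactly observable on $(0,2)$ under the GCC, which is \cite[Theorem~1.8]{RLTT16}, as already recalled in the discussion preceding the theorem. I do not anticipate a serious obstacle here: the substance of the result lives in Section~\ref{sec:dissip}, and the only point that needs care is the bookkeeping above — in particular checking the formula for $B(t)^*T_0(t)x$ and matching the abstract observability conditions on $(0,2)$ with the integral conditions over $\Omega_0$ in the statement.
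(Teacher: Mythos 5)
Your proposal is correct and coincides with the paper's (implicit) proof: Theorem~\ref{thm:wave} is indeed obtained by specialising Theorem~\ref{thm:special} with $\tau=2$, the essential bookkeeping being the computation $B(t)^*T_0(t)x=(0,b(\cdot,t)^{1/2}v(\cdot,t;x))^T$, which turns $\Fix T$ into $Y$ via \eqref{eq:Fix} and the abstract approximate/exact observability conditions on $(0,2)$ into the integral conditions over $\Omega_0$, together with the appeal to \cite[Theorem~1.8]{RLTT16} for the GCC implication and to Remark~\ref{rem:const_energy} to identify $z_0$ with the undamped solution started from $Px$. The one thing worth flagging is that you have tacitly read $\Omega_0$ as $(0,1)\times(0,2)$, which is what the integral identity and the later examples require; the line preceding the theorem defines it with $1<t<2$, which is a typographical slip in the paper rather than a gap in your argument.
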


We conclude with two simple examples illustrating the way Theorem~\ref{thm:wave} can be applied in the case where $b=\mathbbm{1}_\omega$ for a $2$-translation invariant subset $\omega$ of $\Omega$. We introduce a novel approach to analysing exponential convergence to periodic orbits by studying uniform exponential stability of a related problem with a `collapsed' damping region. The collapsing technique in particular allows us to focus our attention on the complement $Z$ of the initial values resulting in non-trivial periodic orbits, and to deduce exact $Z$-observability of the original wave equation by verifying the GCC for the modified problem.

\begin{figure}[ht]
  \begin{minipage}[b]{0.32\linewidth}
    \begin{flushleft}
      \includegraphics[height=6.5cm]{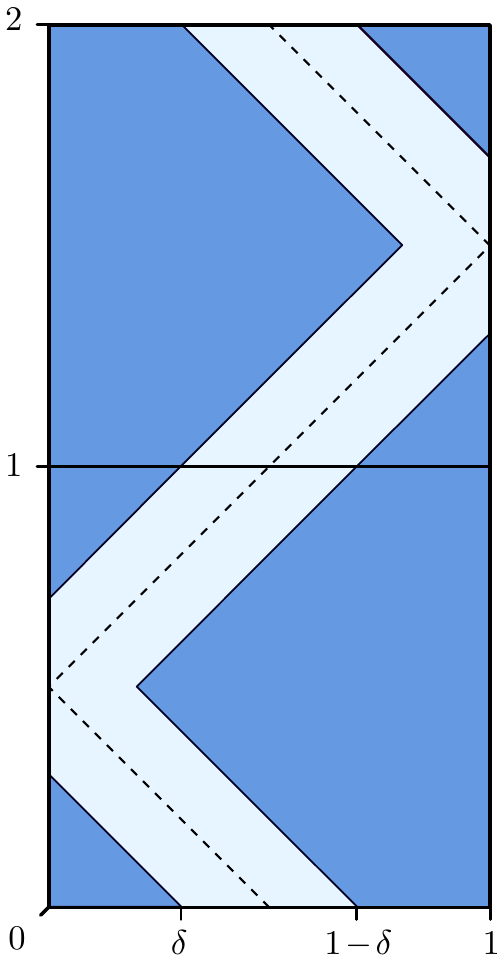}
    \end{flushleft}
  \end{minipage}
  \hfill\hfill
  \begin{minipage}[b]{0.32\linewidth}
    \begin{center}
      \includegraphics[height=6.5cm]{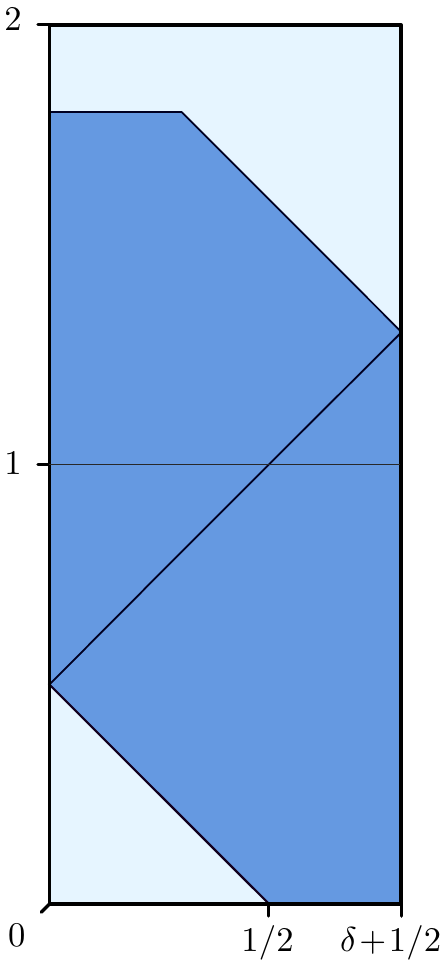}
    \end{center}
  \end{minipage}
  \begin{minipage}[b]{0.32\linewidth}
    \begin{flushright}
      \includegraphics[height=6.5cm]{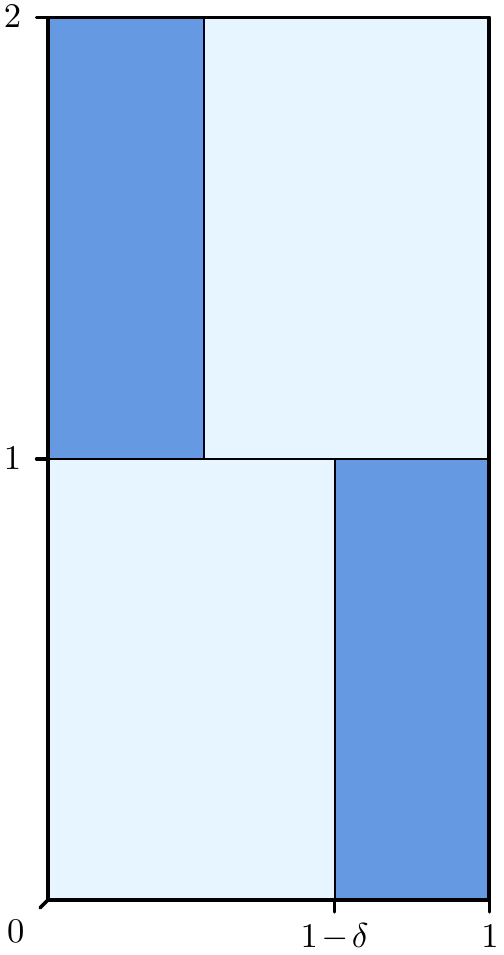}
    \end{flushright}
  \end{minipage}

    \caption{The damping regions $\omega_0$ for Examples~\ref{ex:wave1} and \ref{ex:wave2}, with $s$ on the horizontal axis and $t$ on the vertical axis. The middle picture shows the `collapsed' region considered in Example~\ref{ex:wave1}.}
      \label{fig:WavePath}
\end{figure}

\begin{ex}\label{ex:wave1}
Let $\delta\in [0,1]$ and let $p\colon[0,2]\to[0,1]$ be (part of) the characteristic ray passing through the points $(1/2,0)$, $(0,1/2)$, $(1/2,1)$, $(1,3/2)$ and $(1/2,2)$. Suppose that $\omega_0=\omega\cap\Omega_0$ is given by
$$\omega_0=\left\{(s,t)\in\Omega_0: |s-p(t)|>\frac12-\delta\right\}.$$
If $\delta\ge1/2$ then up to a null set $\omega_0$ equals $\Omega_0$. In particular, $Y=\{0\}$ and $\omega$ satisfies the GCC on $(0,2)$, so we have stability and uniform exponential convergence. Suppose now that $\delta\in[0,1/2)$ and let $I_\delta=(\delta,1-\delta)$. Then we have the orthogonal decomposition
$$Y=\left\langle \begin{pmatrix}
u_\delta\\v_\delta
\end{pmatrix}\right\rangle\oplus\left\{\begin{pmatrix}
w\\w'
\end{pmatrix}\in X: w\in H_0^1(I_\delta)\right\},$$
where 
$$u_\delta(s)=
\left\{\begin{aligned}
&s, \quad& 0<s<\delta,\\
&\textstyle{\frac{\delta(1-2s)}{1-2\delta}},& \delta<s<1-\delta,\\
&s-1,& 1-\delta<s<1,
\end{aligned}\right.
\qquad v_\delta(s)=
\left\{\begin{aligned}
&0, \quad& 0<s<\delta,\\
&\textstyle{-\frac{1}{1-2\delta}},& \delta<s<1-\delta,\\
&0,& 1-\delta<s<1.-
\end{aligned}\right.$$
In particular, the system is asymptotically periodic but not stable. In this example the orthogonal projection $P$ onto $Y$ can be computed explicitly. Indeed, for $\delta\le s\le1-\delta$ let $\phi_{\delta,s}\colon X\to\CC$ be the functional given by 
$$\phi_{\delta,s}(x)=\frac{u(s)-u(\delta)}{2}+\frac12\int_{\delta}^sv(r)\,\dd r, \quad x=\begin{pmatrix}
u\\v
\end{pmatrix}\in X,$$
and let $\psi_\delta=\phi_{\delta,1-\delta}$. Then
$$Px=-\frac{2\psi_\delta(x)}{1+2\delta}\begin{pmatrix}
u_\delta\\v_\delta
\end{pmatrix}+\begin{pmatrix}
w\\w'
\end{pmatrix},\quad x=\begin{pmatrix}
u\\v
\end{pmatrix}\in X,$$
where
$$w(s)=\phi_{\delta,s}(x)-\frac{s-\delta}{1-2\delta}\psi_\delta(x), \quad s\in I_\delta.$$
Note that the orthogonal complement $Z$ of $Y$  is given by 
$$Z=\left\{\begin{pmatrix}
u\\v
\end{pmatrix}\in X:u'+v=0 \mbox{ on } I_\delta\right\}.$$
We now show that the convergence to the periodic solution is exponentially fast, and this is achieved by `collapsing' the phase plane in such a way that the resulting damping region satisfies the GCC for the wave equation on a shorter interval. Indeed, let $J_\delta=(0,1/2+\delta)$ and $\Omega_0'=J_\delta\times(0,2)$. Moreover, let 
$$\omega_0'=\big\{(s,t)\in\Omega_0': 1/2<s+t<3/2+2\delta,\, t<3/2+\delta\big\}.$$
For $x\in Z$ it follows from a calculation based on d'Alembert's formula \eqref{eq:d'Alem} that there exists $y\in H_0^1(J_\delta)\times L^2(J_\delta)$ such that $\|y\|=\|x\|$ and
$$\iint_{\Omega_0}\mathbbm{1}_{\omega_0}|v(s,t;x)|^2\,\dd (s,t)=\iint_{\Omega_0'}\mathbbm{1}_{\omega_0'}|{v}(s,t;y)|^2\,\dd (s,t),$$
where $v(\cdot,\cdot;x)$ and $v(\cdot,\cdot;y)$ denote the velocity components of the undamped wave equation on $(0,1)$ with initial data $x$ and on $J_\delta$ with initial data $y$, respectively. Since $\omega_0'$ satisfies the GCC on $(0,2)$ for the wave equation on $J_\delta$, it follows that \eqref{eq:obs_wave} holds for some $\kappa>0$ and all $x\in Z$. In particular, we have uniform exponential convergence to the periodic solution.
\end{ex}

\begin{ex}
  \label{ex:wave2}
Let $\delta\in[0,1]$ and suppose 
that $\omega_0=\omega\cap\Omega_0$ is given by
$$\omega_0=\big((1-\delta,1)\times(0,1)\big)\cup\big((0,\delta)\times(1,2)\big).$$
This can be viewed as a model of a wave equation with \emph{switched damping}.
Note that if $\delta>0$ then separately the damping in each of the two time intervals  would lead to uniform exponential decay for all solutions. 
However, 
the periodically switched system is stable 
 if and only if $\delta\ge1/2$,
since $Y=\{0\}$ for precisely these values of $\delta$. 

If $\delta>1/2$ then (the interior of) $\omega$ satisfies the GCC on $(0,2)$, so \eqref{eq:exp_wave} holds for some $M,\beta>0$. 
For $\delta=1/2$ it is easy to see, by considering initial values of the form $x=(u,u')^T$ for $u\in H_0^1(0,1)$ with support concentrated near the point $s=1/2$, that \eqref{eq:obs_wave} does not hold, and hence nor does \eqref{eq:exp_wave}. 
Now suppose that $\delta\in(0,1/2)$, noting that $\delta=0$ corresponds to the uninteresting case of the undamped wave equation.
Letting $I_\delta=(\delta,1-\delta)$, the spaces $Y$, $Z$ and the projection $P$ are the same as in Example~\ref{ex:wave1}. By considering initial data $x\in Z$ of the form $x=(u,u')^T$ with $u\in H_0^1((0,1)\setminus I_\delta)$ having support concentrated near the points $s=\delta$ and $s=1-\delta$, it is easy to see as before that \eqref{eq:obs_wave} again fails to hold, and hence  \eqref{eq:exp_wave} does not hold either. By Remark~\ref{rem:gen}\eqref{it:slow} the convergence to the periodic solution is in fact arbitrarily slow in this case. On the other hand if $x\in X$ is of the form $x=y+z$, where $y\in Y$ and $z=(u,v)^T\in Z$ is such that $u'+v=0$ on an open interval strictly containing $I_\delta$, then by a similar `collapsing' argument to the one in Example~\ref{ex:wave1} we in fact have exponentially fast convergence to the periodic solution. 
The fact that the monodromy operator is not known explicitly in this case makes it difficult to give a precise description of those initial values $x\in X$ which lead to, say, polynomial rates of convergence to the periodic solution.
\end{ex}


\begin{thebibliography}{10}

\bibitem{AreBat88}
W.~Arendt and C.~J.~K. Batty.
\newblock Tauberian theorems and stability of one-parameter semigroups.
\newblock {\em Trans. Amer. Math. Soc.}, 306:837--841, 1988.

\bibitem{ABHN11}
W.~Arendt, C.J.K. Batty, M.~Hieber, and F.~Neubrander.
\newblock {\em Vector-valued Laplace transforms and Cauchy problems}.
\newblock Birkh\"auser, Basel, second edition, 2011.

\bibitem{BarLeb92}
C.~Bardos, G.~Lebeau, and J.~Rauch.
\newblock Sharp sufficient conditions for the observation, control, and
  stabilization of waves from the boundary.
\newblock {\em SIAM J. Control Optim.}, 30(5):1024--1065, 1992.

\bibitem{BatChi02a}
C.J.K. Batty, R.~Chill, and Y.~Tomilov.
\newblock Strong stability of bounded evolution families and semigroups.
\newblock {\em J. Funct. Anal.}, 193(1):116--139, 2002.

\bibitem{BD08}
C.J.K. Batty and T.~Duyckaerts.
\newblock Non-uniform stability for bounded semi-groups in {B}anach spaces.
\newblock {\em J. Evol. Equ.}, 8:765--780, 2008.

\bibitem{BatHut99}
C.J.K. Batty, W.~Hutter, and F.~R\"abiger.
\newblock Almost periodicity of mild solutions of inhomogeneous periodic
  {C}auchy problems.
\newblock {\em J. Differential Equations}, 156(2):309--327, 1999.

\bibitem{BenDaP07book}
A.~Bensoussan, G.~Da Prato, M.C. Delfour, and S.K. Mitter.
\newblock {\em Representation and Control of Infinite Dimensional Systems}.
\newblock Birkh{\"a}user, Boston, second edition, 2007.

\bibitem{Bur98}
N.~Burq.
\newblock D{\'e}croissance de l'{\'e}nergie locale de l'{\'e}quation des ondes
  pour le probl{\`e}me ext{\'e}rieur et absence de r{\'e}sonance au voisinage
  du r{\'e}el.
\newblock {\em Acta Mathematica}, 180(1):1--29, 1998.

\bibitem{BurGer97}
N.~Burq and P.~G\'erard.
\newblock Condition n\'ecessaire et suffisante pour la contr\^olabilit\'e
  exacte des ondes.
\newblock {\em C. R. Acad. Sci. Paris S\'er. I Math.}, 325(7):749--752, 1997.

\bibitem{CasCin14}
C.~Castro, N.~C{\^\i}ndea, and A.~M{\"u}nch.
\newblock Controllability of the linear one-dimensional wave equation with
  inner moving forces.
\newblock {\em SIAM J. Control Optim.}, 52(6):4027--4056, 2014.

\bibitem{CheFul91}
G.~Chen, S.~A. Fulling, F.~J. Narcowich, and S.~Sun.
\newblock Exponential decay of energy of evolution equations with locally
  distributed damping.
\newblock {\em SIAM Journal on Applied Mathematics}, 51(1):266--301, 1991.

\bibitem{CoLi16}
G.~Cohen and M.~Lin.
\newblock Remarks on rates of convergence of powers of contractions.
\newblock {\em J. Math. Anal. Appl.}, 436(2):1196--1213, 2016.

\bibitem{Da78}
C.M. Dafermos.
\newblock Asymptotic behavior of solutions of evolution equations.
\newblock In {\em Nonlinear evolution equations ({P}roc. {S}ympos., {U}niv.
  {W}isconsin, {M}adison, {W}is., 1977)}, volume~40 of {\em Publ. Math. Res.
  Center Univ. Wisconsin}, pages 103--123. Academic Press, New York-London,
  1978.

\bibitem{EngNag00book}
K.-J. Engel and R.~Nagel.
\newblock {\em One-Parameter Semigroups for Linear Evolution Equations}.
\newblock 2000.

\bibitem{Est84}
J.~Esterle.
\newblock Mittag-{L}effler methods in the theory of {B}anach algebras and a new
  approach to {M}ichael's problem.
\newblock In {\em Proceedings of the conference on {B}anach algebras and
  several complex variables ({N}ew {H}aven, {C}onn., 1983)}, volume~32 of {\em
  Contemp. Math.}, pages 107--129. Amer. Math. Soc., Providence, RI, 1984.

\bibitem{HaTo10}
M.~Haase and Y.~Tomilov.
\newblock Domain characterizations of certain functions of power-bounded
  operators.
\newblock {\em Studia Math.}, 196(3):265--288, 2010.

\bibitem{Ha83}
A.~Haraux.
\newblock Asymptotic behavior of trajectories for some nonautonomous, almost
  periodic processes.
\newblock {\em J. Differential Equations}, 49(3):473--483, 1983.

\bibitem{KT86}
Y.~Katznelson and L.~Tzafriri.
\newblock On power bounded operators.
\newblock {\em J. Funct. Anal.}, 68:313--328, 1986.

\bibitem{Kre85}
U.~Krengel.
\newblock {\em Ergodic Theorems}.
\newblock Walter de Gruyter, Berlin, 1985.

\bibitem{LatRan98}
Y.~Latushkin, T.~Randolph, and R.~Schnaubelt.
\newblock Exponential dichotomy and mild solutions of nonautonomous equations
  in {B}anach spaces.
\newblock {\em Journal of Dynamics and Differential Equations}, 10(3):489--510,
  1998.

\bibitem{Leb96}
G.~Lebeau.
\newblock \'{E}quation des ondes amorties.
\newblock In {\em Algebraic and geometric methods in mathematical physics
  ({K}aciveli, 1993)}, volume~19 of {\em Math. Phys. Stud.}, pages 73--109.
  Kluwer Acad. Publ., Dordrecht, 1996.

\bibitem{Lyu99}
Y.~Lyubich.
\newblock {Spectral localization, power boundedness and invariant subspaces
  under Ritt's type condition}.
\newblock {\em Studia Math.}, 134(2):153--167, 1999.

\bibitem{Mue88}
V.~M\"uller.
\newblock {Local spectral radius formula for operators in Banach spaces}.
\newblock {\em Czechoslovak Math. J.}, 38(4):726--729, 1988.

\bibitem{NaZe99}
B.~Nagy and J.~Zem\'anek.
\newblock A resolvent condition implying power boundedness.
\newblock {\em Studia Math.}, 134(2):143--151, 1999.

\bibitem{Paz83}
A.~Pazy.
\newblock {\em Semigroups of Linear Operators and Applications to Partial
  Differential Equations}.
\newblock Springer, New York, 1983.

\bibitem{RauTay74}
J.~Rauch and M.~Taylor.
\newblock Exponential decay of solutions to hyperbolic equations in bounded
  domains.
\newblock {\em Indiana Univ. Math. J.}, 24:79--86, 1974.

\bibitem{RLTT16}
J.~Le Rousseau, G.~Lebeau, P.~Terpolilli, and E.~Tr\'elat.
\newblock Geometric control condition for the wave equation with a
  time-dependent observation domain.
  \newblock {\em Anal.\ PDE}, 10(4):983--1015, 2017.

\bibitem{Sch02}
R.~Schnaubelt.
\newblock Feedbacks for nonautonomous regular linear systems.
\newblock {\em SIAM J. Control Optim.}, 41(4):1141--1165, 2002.

\bibitem{Sei16}
D.~Seifert.
\newblock {Rates of decay in the classical Katznelson-Tzafriri theorem}.
\newblock {\em J. Anal. Math.}, 130(1):329--354, 2016.

\bibitem{vN96}
J.M.A.M. van Neerven.
\newblock {\em The Asymptotic Behaviour of Semigroups of Linear Operators}.
\newblock Birkh\"auser, Basel, 1996.

\bibitem{Vu95}
Q.P. V{\~u}.
\newblock Stability and almost periodicity of trajectories of periodic
  processes.
\newblock {\em J. Differential Equations}, 115(2):402--415, 1995.

\end{thebibliography}

\end{document}